\theoremstyle{plain}
\newtheorem{theorem}{Theorem}[section]
\newtheorem{lemma}[theorem]{Lemma}
\newtheorem{prop}[theorem]{Proposition}
\newtheorem{coro}[theorem]{Corollary}
\theoremstyle{definition}
\newtheorem{example}[theorem]{Example}
\theoremstyle{remark}
\newtheorem{remark}[theorem]{Remark}
\numberwithin{equation}{section}
\def\H{\mathbb{H}}
\def\N{\mathbb{N}}
\def\C{\mathbb{C}}
\def\R{\mathbb{R}}
\def\Z{\mathbb{Z}}
\def\M{\mathcal{M}}
\def\SL{\mathrm{SL}(2,\mathbb{Z})}
\def\GL{\mathrm{GL}(2,\mathbb{Z})}
\def\Im{\mathrm{Im}}
\def\Re{\mathrm{Re}}
\def\val{\mathrm{val}}
\def\op{\mathrm{op}}
\def\GLN{\mathrm{GL}(2,\N_0)}
\title{Real part of cycle integrals and conjectures of Kaneko}
\author{P.~Bengoechea}
\address{Universitat de Barcelona, Departament de Matem\`atiques i Inform\`atica, Gran Via de les Corts Catalanes, 585, L'Eixample, 08007 Barcelona, Spain}
\email{bengoechea@ub.edu}
\thanks{P.~Bengoechea's research is supported by Ram\'on y Cajal grant RYC2020-028959-I}
\author{S.~Herrero}
\address{Universidad de Santiago de Chile, Dept.~de Matem\'atica y Ciencia de la Computaci\'on, Av.~Libertador Bernardo O'Higgins 3363, Santiago, Chile} 
\email{sebastian.herrero.m@gmail.com}
\thanks{S.~Herrero's research is supported by ANID FONDECYT Regular grant 1250734}
\author{\"O.~Imamo\={g}lu}
\address{ETH, Mathematics Dept., CH-8092, Z\"urich, Switzerland}
\email{ozlem@math.ethz.ch}
\thanks{\"O.~Imamo\=glu's research  is supported by SNF grant 200021-185014}
\begin{document}

\maketitle

\vspace{-1em} 
\begin{center}
    \textit{Dedicated to Prof.~Masanobu Kaneko on his 60+4th birthday}
\end{center}

\begin{abstract}
   We prove two of Kaneko's conjectures on the ``values'' $\val(w)$ of the modular $j$ function at real quadratic irrationalities: we prove the lower bound $\Re(\val(w))\geq \val\left(\frac{1+\sqrt{5}}{2}\right)$ for all real quadratics $w$ and the upper bound $\Re(\val(w))\leq \val\left(1+\sqrt{2}\right)$ for all Markov irrationalities $w$. 
   These results generalize to the ``values'' at quadratic irrationalities of any weakly holomorphic modular function $f$ such that $f(e^{it})$ is real, non-negative and increasing for $t\in [\pi/3,\pi/2]$.
\end{abstract}

\section{Introduction}

Let $j$ denote Klein's modular function, i.e.~the unique holomorphic function defined on the upper-half plane $\H:=\{\tau \in \C:\Im(\tau)>0\}$ that is invariant under the action of the modular group $\SL$ by fractional linear transformations and has a Fourier expansion of the form
$$j(\tau)=\frac{1}{q}+744+\sum_{n=1}^{\infty}c_nq^n \quad \text{ with }c_n\in \C \text{ where } q:=e^{2\pi i \tau}.$$

Kaneko defined in \cite{Kan09} a function $\val(w)$ for the ``values'' of $j$ on real quadratic irrationalities $w$ in terms of cycle integrals normalized by hyperbolic length (see Section \ref{sec:cycle_integrals} for details). The real part of Kaneko's $\val$ function is $\GL$-invariant, and he observed several Diophantine properties of this function. Among others, he conjectured:
\begin{enumerate}
\item[(I)] For all real quadratic irrationalities $w$, $\Re(\val(w))\in [\val(\phi),744]$ where $\phi=\frac{1+\sqrt{5}}{2}$ is the golden ratio and $\val(\phi)=706.3248\ldots$.
\item[(II)] When restricted to Markov irrationalities $w$, the real part of the function  $\val$ satisfies  $\Re(\val(w))\in [\val(\phi),\val(\psi)]$ where $\psi=1+\sqrt{2}$ is the silver ratio and $\val(\psi)=709.8928\ldots$.
\end{enumerate}
Some of Kaneko's conjectures were proved in \cite{BI19,BI20}. In particular, the upper bound in (I) was proven in \cite[Theorem 1]{BI20}. In contrast, to the best of our knowledge no proof of the optimal lower bound in (I) or of the optimal upper bound in (II) has been announced, although weaker bounds for $\Re(\val(w))$ were obtained in \cite{BI20,Ben22,BHI25}.

The purpose of this article is to prove the lower bound in (I) and the upper bound in (II) hence completing the proof of both conjectures. Our first main theorem is the following.

\begin{theorem}\label{thm:Kaneko_lower_bound}
   For every quadratic irrationality $w\in \R$ we have $\Re(\val(w))\geq \val(\phi)$. 
\end{theorem}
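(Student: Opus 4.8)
The plan is to reduce the inequality to a combinatorial statement about the closed geodesic $\gamma_w$ attached to $w$ and then to extract it from the monotonicity hypothesis on $f$. Since $\Re(\val(\cdot))$ depends only on the $\GL$-class of $w$ — equivalently, on the unoriented closed geodesic on $\SL\backslash\H$ — I may assume $w$ reduced and purely periodic, $w=[\overline{a_1,\dots,a_n}]$. Using the description of $\gamma_w$ and of the cycle integral recalled in Section~\ref{sec:cycle_integrals}, decompose $\gamma_w=C_1\cup\cdots\cup C_n$ into the geodesic arcs attached to the successive partial quotients $a_1,\dots,a_n$, and put $\lambda_k=\int_{C_k}ds$ and $I_k=\int_{C_k}\Re f\,ds$. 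Then $\ell(w)=\sum_k\lambda_k$ and $\ell(w)\,\Re(\val(w))=\sum_k I_k$, so $\Re(\val(w))$ is the $\lambda_k$-weighted average of the ratios $I_k/\lambda_k$, and it is enough to show this average is $\ge\val(\phi)$. For $w=\phi=[\overline 1]$ the geodesic $\gamma_\phi$ is the systole: its arcs run from $i$ along the circular geodesic attached to the form $[1,-1,-1]$ up to the edge $\Re\tau=\tfrac12$ at height $\tfrac{\sqrt5}2$ and back to $i$, and $\val(\phi)=\ell(\phi)^{-1}\int_{\gamma_\phi}\Re f\,ds$ is exactly the common value of $I_k/\lambda_k$ along $\gamma_\phi$. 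Thus the theorem asserts that $\phi$ minimises the weighted average of the $I_k/\lambda_k$.

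The core step is a comparison of each arc contribution with the golden one. The differential $\bigl(f(\tau)-\val(\phi)\bigr)\,\dfrac{\sqrt{\disc(w)}\,d\tau}{Q_w(\tau,1)}$ restricts to $\bigl(f-\val(\phi)\bigr)\,ds$ on the geodesic but is holomorphic on $\H$, so I can deform $C_k$ with fixed endpoints to a standard comparison path built from a $\GL$-translate of a golden arc together with a correction (``wedge'') whose depth is dictated by $a_k$, keeping track of how the modular group moves the endpoints. On the golden piece the contribution equals $\val(\phi)$ times its length by construction; on the wedge the hypothesis that $f(e^{it})$ is real, non-negative and \emph{increasing} on $[\pi/3,\pi/2]$ — i.e.\ along the arc of the unit circle from $\rho=e^{i\pi/3}$ to $i$, and symmetrically decreasing back towards $\rho'$ — forces the corresponding integral of $f-\val(\phi)$ to be $\ge 0$, with equality exactly when the wedge degenerates, that is when $a_k=1$ and $C_k$ is a $\GL$-translate of a golden arc. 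The arcs carrying a large partial quotient (deep cusp excursions) are treated separately, using the behaviour of $f$ near the cusp encoded in the definition of $\val$: after regularisation they contribute an amount well above $\val(\phi)\,\lambda_k$, so they never bind.

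Because consecutive arcs share an endpoint, the per-arc estimates cannot simply be summed: at a transition between two partial quotients one may produce a short arc on which $f$ stays close to $f(\rho)=0$, violating $I_k\ge\val(\phi)\lambda_k$ for that single $k$. The concluding step is therefore a telescoping (calibrated sub-action) argument: I would construct a bounded function $h$ on the reduced quadratic irrationalities such that, for every $k$,
\[
I_k-\val(\phi)\,\lambda_k\ \ge\ h(w_{k+1})-h(w_k),\qquad w_k:=[\overline{a_k,a_{k+1},a_{k+2},\dots}],
\]
with equality all along the periodic orbit of $\phi$ under the shift on continued fractions. Summing over a period makes the right-hand side vanish, which gives $\Re(\val(w))\ge\val(\phi)$; chasing back the equality case then forces every $a_k=1$, i.e.\ $w$ is $\GL$-equivalent to $\phi$, so the bound is sharp and its extremiser is characterised.

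The genuine obstacle is this last step. What has to be proved is an ergodic-optimisation ``ground state'' statement whose minimiser is the golden geodesic, and producing $h$ — equivalently, verifying the sub-action inequality uniformly — requires sharp control of $I_k-\val(\phi)\,\lambda_k$ as a function of $a_k$ together with the incoming and outgoing directions recorded by the two tails of the continued fraction. The monotonicity of $f$ on the arc from $\rho$ to $i$ is precisely what pins the minimiser to $\phi$ rather than to some other bounded geodesic, but turning it into a clean uniform estimate — in particular near the boundary of the Markov regime (words in $1$'s and $2$'s), where the inequality is tight, and for arbitrarily deep excursions, where the regularisation must be handled carefully — is where the real work lies.
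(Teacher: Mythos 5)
You propose a genuinely different architecture from the paper, but the key step is missing and you acknowledge this yourself. Rather than working arc-by-arc on the closed geodesic and constructing a sub-action $h$, the paper uses Proposition~\ref{prop:formula_re_valf} to write $\Re(\val_f(w))=\tfrac12\int_{\pi/3}^{2\pi/3} f(e^{it})\big(\hat S(w,t)+\hat S(w,\pi-t)\big)\,dt$, an explicit integral of $f$ over one fixed compact arc, against a kernel $\hat S$ that carries all the dependence on $w$. This sidesteps the deep cusp excursions and regularisation issues you flag as needing separate care. The theorem then reduces, by Lemma~\ref{lem: D integral}, to showing the kernel difference $D_{\phi,w}(t)$ is decreasing on $[\pi/3,\pi/2]$, which is in turn a pointwise-in-$t$ positivity $\sum_{i,m} Z(w_{i,m},t)\ge 0$ for the explicit function $Z$ of \eqref{eq:def_Z(x,t)}. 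The ``calibration'' you want is replaced by an elementary term-matching: the only bad terms are the $w_{i,1}<\phi$, and each is paired either with the single good term $1/(w_{i,1}-1)$ (Lemma~\ref{lem:first_prop_Z(x,t)}$(iv)$, when $a_{i+1}\le 2$) or with $w_{i+1,2}$ and $w_{i+1,3}$ (Lemma~\ref{lem:first_prop_Z(x,t)}$(ii)$--$(iii)$, when $a_{i+1}\ge 3$), all verified by concrete estimates on $Z$.

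The genuine gap in your proposal is the sub-action inequality $I_k-\val(\phi)\,\lambda_k\ge h(w_{k+1})-h(w_k)$. You do not construct $h$, and you explicitly concede that ``producing $h$\dots\ is where the real work lies.'' Until $h$ is exhibited and the inequality proved uniformly in the partial quotient $a_k$ and in the two continued-fraction tails, what you have is a programme, not a proof. It is also not clear that the monotonicity of $f(e^{it})$ on $[\pi/3,\pi/2]$, entering your argument only through the ``wedge'' heuristic, can control arc contributions for arbitrarily large $a_k$; the paper avoids this precisely because $f$ is integrated only once over the fixed arc $\{e^{it}:\pi/3\le t\le 2\pi/3\}$, while all $w$-dependence sits inside the kernel. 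Your sketch correctly identifies a plausible ergodic-optimisation strategy and correctly identifies its own missing step, but that missing step is essentially the content of the theorem.
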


We now recall the construction of Markov irrationalities. First, given a sequence of integers $(a_i)_{i=1}^\infty$ with $a_i\geq 1$ for all $i\geq 2$ we write
\[[a_1;a_2,a_3,\ldots ]:=a_1+\dfrac{1}{a_2+\dfrac{1}{a_3+\dfrac{1}{\ddots}}}\]
for the corresponding infinite continued fraction. Starting with the purely periodic continued fraction expansions
\[\phi=\frac{1+\sqrt{5}}{2}=[\overline{1;1}] \quad \text{and}\quad \psi=1+\sqrt{2}=[\overline{2;2}],\]
and using the conjunction of purely periodic continued fraction expansions
\[[\overline{a_1;\ldots ,a_r}]\odot [\overline{b_1;\ldots ,b_s}]:=[\overline{a_1;\ldots,a_r,b_1,\ldots ,b_s}]\]
in the form $w_1,w_2\mapsto w_2\odot w_1$ we obtain the Markov tree as illustrated in Figure \ref{fig:Markov_tree}. Here we use the abbreviations $[\overline{1;1}]=[\overline{1_2}]$, $[\overline{2;2}]=[\overline{2_2}]$, $[\overline{2;2,1,1}]=[\overline{2_2,1_2}]$, etc.

\begin{figure}[h!]
\centering
\begin{forest}
[,phantom [{$[\overline{1_2}]$},name=p1] [] [] [] [] [] [] 
[
[{$[\overline{2_2,1_2}]$},name=p2, no edge,tikz={\draw (p2.north)--(p1.south);}
[
{$[\overline{2_2,1_4}]$}
[
{$[\overline{2_2,1_4}]$}
[$\ldots$]
[$\ldots$]]
[
{$[\overline{2_2,1_2,2_2,1_4}]$}
[$\ldots$]
[$\ldots$]]
]
[
{$[\overline{2_4,1_2}]$}
[
{$[\overline{2_4,1_2,2_2,1_2}]$}
[$\ldots$]
[$\ldots$]] 
[
{$[\overline{2_6,1_2}]$}
[$\ldots$]
[$\ldots$]]]]]
[] [] [] [] [] []  [{$[\overline{2_2}]$},name=p3] tikz={\draw (p2.north)--(p3.south);}
]
\end{forest} \caption{The Markov tree.}\label{fig:Markov_tree}
\end{figure}
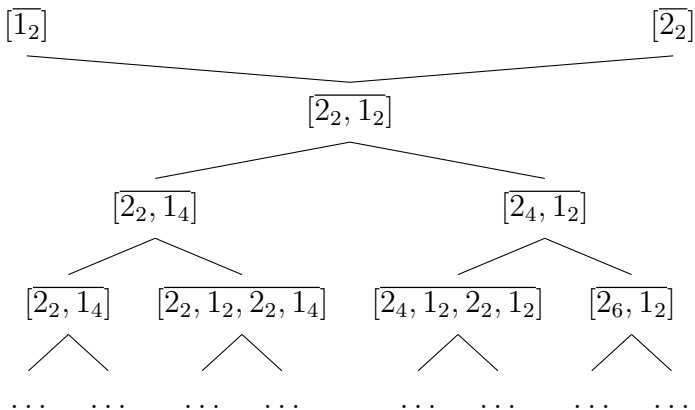

We let $\M$ denote the collection of all real quadratic irrationalities appearing in this tree. A real number $w\in \M$ is called a Markov irrationality. These real quadratics are, up to $\GL$-equivalence, the worst approximable numbers in the sense of Diophantine approximation according to Markov's famous theorem (see, e.g., \cite{Aig2013}). Our second main theorem is the following.

\begin{theorem}\label{thm:Kaneko_upper_bound_Markovs}
   For every Markov irrationality $w\in \M$ we have $\val(\phi)\leq \Re(\val(w))\leq \val(\psi)$. 
\end{theorem}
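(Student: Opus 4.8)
Since $\val(\phi)\le \Re(\val(w))$ is exactly Theorem~\ref{thm:Kaneko_lower_bound}, the whole content of the statement is the upper bound $\Re(\val(w))\le \val(\psi)$ for $w\in\M$. Using that $\Re(\val(\cdot))$ is $\GL$-invariant we would first pass to a purely periodic reduced continued fraction $w=[\overline{a_1;a_2,\ldots,a_\ell}]$. By the description of $\M$ recalled above its period is built from the blocks $\mathtt{11}$ and $\mathtt{22}$ by iterated conjunction, so $a_i\in\{1,2\}$ for all $i$ and, crucially, every maximal run of equal partial quotients has \emph{even} length; in particular none of the cyclic tails $[\overline{a_k;a_{k+1},\ldots}]$ is the odd-run pattern $[\overline{2;1}]$. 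We would then invoke the reduction theory of the closed geodesic $\gamma_w$ -- the same bookkeeping underlying the bounds in \cite{BI19,BI20,Ben22,BHI25} -- to split the cycle integral defining $\val(w)$ over one period: $\gamma_w=\bigcup_{k=1}^{\ell}g_k\sigma_k$ with $g_k\in\SL$ the partial products of the matrices $\left(\begin{smallmatrix}a_i&1\\1&0\end{smallmatrix}\right)$, and $\sigma_k=\sigma(u_k,v_k)$ an elementary geodesic arc depending only on the forward tail $u_k=[\overline{a_k;a_{k+1},\ldots}]$ and the backward tail $v_k=[0;a_{k-1},\ldots,a_1,a_\ell,\ldots]$. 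Since $f$ and the form are invariant under $g_k$, this yields a finite expression
\[
\Re(\val(w))=\frac{\sum_{k=1}^{\ell}I_f(u_k,v_k)}{\sum_{k=1}^{\ell}I_1(u_k,v_k)},\qquad I_f(u,v):=\Re\!\!\int_{\sigma(u,v)}\!\!f(z)\,\frac{dz}{Q_{u,v}(z,1)},
\]
with $I_1$ the same integral for $f\equiv1$; the reduction can be arranged so that each arc is reduced, whence $I_1(u,v)>0$ and $\sum_k I_1(u_k,v_k)=\oint_{\gamma_w}dz/Q(z,1)>0$.

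The statement then follows from a single \emph{local} inequality: for every pair $(u,v)$ of forward/backward tails occurring in a Markov word,
\[
v_f(u,v):=\frac{I_f(u,v)}{I_1(u,v)}\ \le\ \val(\psi).
\]
Indeed, summing and using $I_1(u_k,v_k)>0$ gives $\Re(\val(w))\le\val(\psi)$; the constant is sharp because for $w=\psi=[\overline{2;2}]$ every $(u_k,v_k)$ equals $\big([\overline{2;2}],[0;\overline{2;2}]\big)$, at which the local inequality is an equality. (The matching bound $v_f(u,v)\ge\val(\phi)$, valid for \emph{every} tail pair, is the local estimate behind Theorem~\ref{thm:Kaneko_lower_bound}, which we would simply quote.) To reach $v_f$ we would use the hypothesis on $f$: after an $\SL$-transformation the arc $\sigma(u,v)$, which runs from $-v\in(-1,0)$ to $u>1$, folds into the standard fundamental domain so that $I_f(u,v)$ and $I_1(u,v)$ are assembled from integrals over a sub-arc $\{e^{it}:\theta_-(u,v)\le t\le\theta_+(u,v)\}$ of $\{e^{it}:\pi/3\le t\le2\pi/3\}$ -- along which $f$ is real, $\ge0$, and, by the reflection $f(-\bar z)=\overline{f(z)}$, increases to its maximum $f(i)$ at $t=\pi/2$ and decreases afterwards -- together with a contribution from the vertical sides that, paired under $z\mapsto-\bar z$, enters only the imaginary part. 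Thus $v_f(u,v)$ is a positive-measure-weighted average of values $f(e^{it})$ over an admissible sub-arc, and $\theta_\pm(u,v)$ together with the weight depend monotonically on finite truncations of $u$ and $v$.

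The heart of the matter, and the main obstacle, is this local inequality with the sharp constant. It cannot be checked arc by arc: inside one elementary piece $f$ is large where $\gamma_w$ climbs high (for $\psi$, near $i\sqrt2$) and small or negative where it approaches a vertical side, so only the $f$-centroid of the whole piece -- the combination dictated by the continued-fraction window -- is bounded by $\val(\psi)$. We would therefore (i) make explicit how $\theta_\pm(u,v)$ and the weight $\Re(dz/Q)$ depend on the leading partial quotients of $u$ and $v$; (ii) show that replacing a partial quotient by the locally worst admissible value moves the weighted centroid monotonically, using the unimodality of $t\mapsto f(e^{it})$ on $[\pi/3,2\pi/3]$; and (iii) prove that, over the windows $(\ldots,a_{k-1},a_k,a_{k+1},\ldots)$ allowed by the Markov combinatorics -- entries in $\{1,2\}$, even runs -- the supremum of $v_f$ is attained at the homogeneous window of $\psi$. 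Step (iii) is where the even-run structure is indispensable: it rules out mixed $\{1,2\}$-patterns such as the $[\overline{2;1}]$-type tails (which satisfy $a_i\le2$ but are \emph{not} Markov and for which a crude monotonicity estimate would overshoot $\val(\psi)$), leaving $[\overline{2;2}]$ as the extremal admissible configuration. Carrying out this extremal analysis while controlling the sign and phase of the complex differential $dz/Q$ along the reduced arcs is the technical crux; should the pure local bound prove slightly too strong, the same scheme applies to suitable sums of consecutive local terms.
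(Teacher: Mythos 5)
Your outline correctly identifies the lower bound as Theorem~\ref{thm:Kaneko_lower_bound} and isolates the upper bound, and you are right that the Markov combinatorics (entries in $\{1,2\}$, all runs of even length) is what ultimately makes $\psi$ extremal. But the central claim on which your proposal rests — the ``local inequality'' $v_f(u,v)\le\val(\psi)$ for each individual elementary arc occurring along a Markov geodesic — is not what the paper proves, and as stated it is almost certainly false. The paper's analogue of a per-arc estimate is the sign of $U(x,t)$ in \eqref{eq:def_U}, and the key Lemma~\ref{lem:key_prop_U} does \emph{not} assert $U(x,t)\ge0$ for every admissible $x$; it only asserts non-negativity after pairing two terms ($U(x,t)+U(\Phi(x),t)\ge0$) or four terms ($U(x,t)+U(\Phi(x),t)+U(\Psi(x),t)+U(\Phi\circ\Psi(x),t)\ge0$), with the grouping dictated by a cyclic rearrangement of the double sum (as in the third step of Section~\ref{sec:strategy_second_main}). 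In other words, ``bad'' arcs do occur in Markov words and must be offset by their neighbours; a proof that bounds each local ratio on its own cannot close. You partially anticipate this in your final hedge about ``suitable sums of consecutive local terms,'' but that is precisely where the argument actually lives, and it is left undone.

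A second, structural gap is that your proposal never passes through the reduction that makes the hypothesis on $f$ usable. The paper does not compare $f$-weighted centroids arc by arc; it first writes $\Re(\val_f(w))$ as $\tfrac12\int_{\pi/3}^{2\pi/3}f(e^{it})\bigl(\hat S(w,t)+\hat S(w,\pi-t)\bigr)\,dt$ (Proposition~\ref{prop:formula_re_valf}), then shows that the \emph{difference} density $D_{w,\psi}(t)$ is decreasing on $[\pi/3,\pi/2]$ and symmetric under $t\mapsto\pi-t$, and only then invokes Lemma~\ref{lem: D integral} to convert monotonicity of $D_{w,\psi}$ plus monotonicity of $f(e^{it})$ into the sign of the integral. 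Your sketch instead tries to fold each arc into the fundamental domain and argue with the unimodality of $f$ directly on the weights $\Re(dz/Q)$; even setting aside the per-arc issue above, controlling the phase of $dz/Q(z,1)$ along a non-symmetric reduced arc so that the weight has a fixed sign is exactly the kind of difficulty the $\hat S$-density formula is designed to avoid. Finally, your steps (i)–(iii) are a roadmap, not a proof: you do not produce the monotonicity/extremality estimates, and the paper's substitutes (Lemma~\ref{lem:key_prop_U}, together with the comparison Lemma~\ref{lem:comparison_continued_fractions} to place the odd-indexed cyclic shifts of a Markov word in $[\phi,\psi]$) are precisely the content you would still need to supply.
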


Our methods of proof for Theorems \ref{thm:Kaneko_lower_bound} and \ref{thm:Kaneko_upper_bound_Markovs} parallel the techniques and computations carried out in \cite{BHI25} where similar bounds for the values of certain \emph{Lyapunov exponent} attached to modular functions were given. The results obtained in \emph{loc.~cit.} were motivated by the conjectures of Kaneko \cite{Kan09} mentioned above and works of Spalding and Veselov \cite{SV17, SV18}. 

Theorems \ref{thm:Kaneko_lower_bound} and \ref{thm:Kaneko_upper_bound_Markovs} generalize to the ``values'' at quadratic irrationalities of any weakly holomorphic modular function $f$ such that $f(e^{it})$ is real, non-negative and increasing for $t\in [\pi/3,\pi/2]$; see Theorems \ref{thm:first_main} and \ref{thm:second_main} in Section \ref{sec:cycle_integrals}.

It is a natural question to ask if similar methods as the ones used in this paper lead to a proof of Kaneko's conjectural bounds for the imaginary part of the $\val$ function, which predict that $\Im(\val(w))\in [-1,1]$ for all quadratic irrationals $w$. We plan to return to this problem in the near future.

\subsection{Outline of the paper} This paper is organized as follows. In the next section, we recall the definition of the cycle integrals of a weakly holomorphic modular function $f$ and introduce a function $\val_f(w)$ on real quadratic irrationalities $w$ that generalizes Kaneko's $\val$ function. There we also present Theorems \ref{thm:first_main} and \ref{thm:second_main}, which are generalizations of Theorems \ref{thm:Kaneko_lower_bound} and \ref{thm:Kaneko_upper_bound_Markovs}, respectively. In Section \ref{sec:formula_Revalf} we present a formula for $\Re(\val_f(w))$ that is used in the proofs of our results. The proof strategies of Theorems \ref{thm:first_main} and \ref{thm:second_main} are presented in steps in Sections \ref{sec:strategy_first_main} and \ref{sec:strategy_second_main}, respectively. These rely on several properties of certain auxiliary functions whose proofs are given in the appendix.

\section*{Aknowledgements}

The authors thank Prof. Masanobu Kaneko for his beautiful paper \cite{Kan09} that motivated this work. We also thank the organizers of the conference ``Modular Forms and Multiple Zeta Values'' held in honor of Prof.~Kaneko's 64th birthday at Kindai University in February 2025, where some of the results of this paper were announced.

\section{Cycle integrals and Kaneko's $\val$ function}\label{sec:cycle_integrals}

Given a hyperbolic matrix $A\in \SL$ we let $\tilde{w},w\in \R$ denote the attracting and repelling fixed points of $A$, respectively. Let $Q(x,y)=ax^2+bxy+cy^2$ be an integral, primitive, indefinite quadratic form satisfying $Q(w,1)=Q(\tilde{w},1)=0$. Among the two quadratic forms $Q$ and $-Q$ we choose the one satisfying $\mathrm{sgn}(a)=\mathrm{sgn}(w-\tilde{w})$. With this convention, we have the formulas
\begin{equation*}
    w=\frac{-b+\sqrt{D}}{2a},\quad \tilde{w}=\frac{-b-\sqrt{D}}{2a},
\end{equation*}
where $D:=b^2-4ac>0$ is the discriminant of $Q$. Then, given $f:\H \to \C$ a weakly holomorphic modular function  for $\SL$ which is real and  non-negative on the geodesic arc $\{e^{it} : \pi/3\leq t\leq 2\pi/3\}$, we put
\begin{equation*}
    I_f(A):=\int_{\tau_0}^{A\tau_0}\frac{f(\tau)\sqrt{D}}{Q(\tau,1)}d\tau \quad (\text{any } \tau_0\in \H).
\end{equation*}
In the case $f\equiv 1$ one has $I_1(A)=2\log(\varepsilon)$ where $\varepsilon>1$ is the largest eigenvalue of $A$. 

Starting with a real quadratic irrationality $w$ we can define Kaneko's $\val$ function as
\[\val(w):=\frac{I_j(A)}{I_1(A)} \quad \text{where $A\in \SL$ is any hyperbolic element fixing $w$}.\]
In particular, if $w=[\overline{a_1;\ldots,a_\ell}]$ with $\ell\geq 2$ even and $a_i\geq 1$ for all $i$, then we can choose $A=T^{a_1}V^{a_2}\cdots T^{a_{\ell-1}}V^{a_\ell}$. 

In this paper, we are interested in the values of $\Re(\val(w))$. More generally, given a weakly holomorphic modular function $f$ as above, and a real quadratic irrationality $w$, we define
\begin{equation}\label{def_valf}
   \val_f(w):=\frac{I_f(A)}{I_1(A)} \quad \text{for any hyperbolic matrix $A\in \SL$ fixing $w$}. 
\end{equation}

Recall that $\phi=\frac{1+\sqrt{5}}{2}$ denotes the golden ratio and $\psi=1+\sqrt{2}$ denotes the silver ratio. Since $\phi$ is a fixed point of $\Phi:=\left(\begin{smallmatrix}
    1 & 1 \\ 1 & 0
\end{smallmatrix}\right)$ and $\psi$ is a fixed point of $\Psi:=\left(\begin{smallmatrix}
    2 & 1 \\ 1 & 0
\end{smallmatrix}\right)$, which are matrices in $\GL$ of determinant $-1$, the values $\val_f(\phi)$ and $\val_f(\psi)$ are real (see Lemma \ref{lem:prop_valf}$(ii)$ in Section \ref{sec:formula_Revalf}).

Theorems \ref{thm:Kaneko_lower_bound} and \ref{thm:Kaneko_upper_bound_Markovs} in the introduction are direct consequences of the following two results when choosing $f=j$.

\begin{theorem}\label{thm:first_main}
Let $f$ be a weakly holomorphic modular function for $\SL$ that is real and non-negative on the geodesic arc $\{e^{it} : \pi/3\leq t\leq 2\pi/3\}$. Assume that $f(e^{it})$ is increasing for $t\in [\pi/3,\pi/2]$. Then, for every quadratic irrationality $w$ we have $\Re(\val_f(w))\geq \val_f(\phi)$. 
\end{theorem}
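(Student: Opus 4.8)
The plan is to reduce the inequality $\Re(\val_f(w))\geq \val_f(\phi)$ to a pointwise statement along the geodesic that defines the cycle integral, exploiting that $\phi=[\overline{1;1}]$ corresponds to the "shortest" closed geodesic and that the integrand $f$ is smallest near the bottom of the fundamental domain. First I would invoke the formula for $\Re(\val_f(w))$ promised in Section~\ref{sec:formula_Revalf}: writing $w=[\overline{a_1;\ldots,a_\ell}]$ with $\ell$ even and $A=T^{a_1}V^{a_2}\cdots T^{a_{\ell-1}}V^{a_\ell}$, the geodesic from $\tau_0$ to $A\tau_0$ can be cut into arcs, each of which is an $\SL$-translate of a piece of the imaginary axis or of the unit circle arc $\{e^{it}:\pi/3\leq t\leq 2\pi/3\}$. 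Since $\Re$ of a cycle integral kills the contributions that come in conjugate pairs, $\Re(\val_f(w))$ should reduce to a weighted average of values $f(e^{it})$ over a finite collection of arcs on the unit circle, with weights summing (after normalization by $I_1(A)=2\log\varepsilon$) to $1$; symbolically $\Re(\val_f(w))=\frac{1}{2\log\varepsilon}\sum_k \int_{I_k} f(e^{it})\,d\mu_k(t)$ for explicit subintervals $I_k\subseteq[\pi/3,2\pi/3]$ and explicit positive densities $d\mu_k$.

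The key step is then to bound this average below by $\val_f(\phi)$. Because $f(e^{it})$ is increasing on $[\pi/3,\pi/2]$ and (by the reflection $t\mapsto \pi-t$, which is the action of $S$) symmetric about $t=\pi/2$, the function $f(e^{it})$ attains its minimum over $[\pi/3,2\pi/3]$ precisely at the endpoints $t=\pi/3$ and $t=2\pi/3$, i.e.\ at the corner $\rho=e^{i\pi/3}$ of the fundamental domain. The golden ratio's geodesic is the one whose unit-circle arcs cluster as tightly as possible around $\rho$; for a general $w$, the continued fraction partial quotients $a_i$ only push the relevant arcs toward the center $t=\pi/2$ where $f$ is \emph{larger}. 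So the strategy is: (1) express $\val_f(\phi)$ itself via the same arc decomposition (it is $\frac{1}{2\log\phi}$ times a single canonical integral coming from $V=ST$ acting on the arc near $\rho$); (2) show that each arc $I_k$ appearing for a general $w$, after transporting it back to the fundamental arc by the appropriate power of $S$ and $T$, lies in a region where $f(e^{it})\geq$ its value on the corresponding piece of the $\phi$-geodesic; (3) combine with the weight normalization. Concretely I expect this to come down to a monotonicity/convexity estimate for an auxiliary function $g_f$ of the partial quotients $a_i$ (the per-step contribution), showing $g_f$ is minimized at $a_i=1$ for all $i$, which is exactly the structure of the arguments in \cite{BHI25}.

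The main obstacle will be step (2): controlling how the unit-circle arcs move under the hyperbolic matrix $A=T^{a_1}V^{a_2}\cdots$ as the partial quotients vary, and proving the resulting per-step quantity is monotone in each $a_i$ and jointly minimized at the all-ones sequence. This requires a careful parametrization of the geodesic's intersections with the $\SL$-translates of the fundamental arc, and then a real-analysis estimate: one must show that replacing any $a_i\geq 2$ by $a_i=1$ (or, more carefully, passing along the Markov-tree/Stern--Brocot structure toward $\phi$) does not increase the normalized average of $f(e^{it})$. I would isolate this as a lemma about a function of two or three real variables — the endpoints of the transported arc and the eigenvalue contribution $\log\varepsilon$ — whose sign is then verified by elementary calculus, with the increasing hypothesis on $f(e^{it})$ entering exactly once, at the point where one compares $f$ on a wider arc (general $w$) to $f$ on the narrowest arc hugging $\rho$ (the case $w=\phi$). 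The remaining bookkeeping — that the weights are positive and sum correctly, and that the limit of finite truncations of the continued fraction converges — is routine and parallels \emph{loc.\ cit.}
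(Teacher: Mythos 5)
Your high-level picture is right in outline — Proposition~\ref{prop:formula_re_valf} does express $\Re(\val_f(w))$ as $\frac{1}{2}\int_{\pi/3}^{2\pi/3} f(e^{it})\,\big(\hat S(w,t)+\hat S(w,\pi-t)\big)\,dt$, i.e.\ as an average of $f(e^{it})$ against a $w$-dependent probability density on $[\pi/3,2\pi/3]$, and the increasing hypothesis on $f(e^{it})$ is what turns ``the $\phi$-density lives nearer $t=\pi/3$'' into the desired inequality. But your concrete step~(2) would not work, and the two mechanisms that actually carry the proof are absent from the proposal.

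First, the comparison cannot be pointwise. Both the $w$-density and the $\phi$-density integrate to $1$, so neither dominates the other on the whole interval, and there is no canonical matching between the arcs of the $w$-geodesic (of which there are $a_1+\cdots+a_\ell$ per period) and those of the $\phi$-geodesic (two per period). The statement ``each arc $I_k$ for $w$ lies where $f(e^{it})\geq$ its value on the corresponding piece of the $\phi$-geodesic'' is therefore not a well-posed claim. What the paper proves instead is a \emph{sign-change} statement about the density difference: $D_{\phi,w}(t):=\big(\hat S(\phi,t)+\hat S(\phi,\pi-t)\big)-\big(\hat S(w,t)+\hat S(w,\pi-t)\big)$ is symmetric under $t\mapsto\pi-t$, has mean zero, and is \emph{monotone decreasing} on $[\pi/3,\pi/2]$. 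Once this is known, the inequality $\int f(e^{it})D_{\phi,w}(t)\,dt\leq 0$ follows from a Chebyshev/rearrangement-type lemma (Lemma~\ref{lem: D integral}); that lemma is the only place the hypothesis on $f$ enters, and it consumes nothing more than non-negativity, symmetry, and monotonicity of $f(e^{it})$.

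Second, the heart of the argument — proving that $D_{\phi,w}(t)$ is decreasing — is a term-by-term analysis you do not anticipate. Differentiating in $t$ reduces the claim to $\sum_{i,m} Z(w_{i,m},t)\geq 0$ for a single explicit function $Z(x,t)$ (see~\eqref{eq:def_Z(x,t)}), and Lemma~\ref{lem:first_prop_Z(x,t)} shows $Z(x,t)<0$ exactly for $x\in[1,\phi)$ and $\geq 0$ for $x\geq\phi$. The ``bad'' terms (those $w_{i,m}<\phi$, necessarily $m=1$) are then matched with adjacent ``good'' terms — either $w_{i,1}$ with $\frac{1}{w_{i,1}-1}=w_{i+1,a_{i+1}}$ when $a_{i+1}\leq 2$, or $w_{i,1}$ with $w_{i+1,2}$ and $w_{i+1,3}$ when $a_{i+1}\geq 3$ — so that each matched block has non-negative $Z$-sum. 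This local matching is the crux, and it is a different mechanism from your proposed ``per-step contribution $g_f(a_1,\ldots,a_\ell)$ minimized at the all-ones sequence'': the paper never perturbs a single $a_i$ and tracks how $\Re(\val_f(w))$ changes; it proves a global monotonicity of the density difference by a local bookkeeping of signs, with all the calculus pushed into the appendix estimates for $Z$. Your proposal gestures at the existence of such estimates but would need to be restructured around $D_{\phi,w}$, its monotonicity, and the good/bad matching to become a proof.
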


\begin{theorem}\label{thm:second_main}
Let $f$ be as in Theorem \ref{thm:first_main} and let $w\in \M$ be a Markov irrationality. Then, we have $\val_f(\phi)\leq \Re(\val_f(w))\leq \val_f(\psi)$. 
\end{theorem}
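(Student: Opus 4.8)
\textbf{Proof proposal for Theorem \ref{thm:second_main}.} The plan is to reduce both inequalities to combinatorial statements about the continued fraction expansions of Markov irrationalities and then to estimate $\Re(\val_f(w))$ using the formula from Section \ref{sec:formula_Revalf}. The lower bound $\Re(\val_f(w))\geq \val_f(\phi)$ is already contained in Theorem \ref{thm:first_main}, so the real content is the upper bound $\Re(\val_f(w))\leq \val_f(\psi)$ for $w\in \M$. First I would recall that every Markov irrationality is, up to $\GL$-equivalence, represented by a purely periodic continued fraction whose period is a word in the letters $1$ and $2$ obtained from the Markov tree by repeated $\odot$-concatenation; in particular (this is the classical structure theorem, see e.g.\ \cite{Aig2013}) every such period is a concatenation of the blocks $11$ and $22$, and moreover the $1$'s and $2$'s always occur in even-length runs $1_{2k}$, $2_{2k}$. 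The two extreme cases are the all-$1$'s word (giving $\phi$) and the all-$2$'s word (giving $\psi$).

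The next step is to use the formula for $\Re(\val_f(w))$ from Section \ref{sec:formula_Revalf}: for $w=[\overline{a_1;\ldots ,a_\ell}]$ this should express $\Re(\val_f(w))$ as a ratio $\big(\sum_i \int_{\gamma_i} \cdots\big)\big/\big(\sum_i 2\log\varepsilon_i\big)$ of contributions indexed by the partial quotients, where each term depends only on the local continued-fraction data (a pair or short window of consecutive partial quotients) and, crucially, the geometry of the corresponding geodesic arc inside the standard fundamental domain, where $f$ is controlled by the monotonicity hypothesis $f(e^{it})\nearrow$ on $[\pi/3,\pi/2]$. Concretely I expect the proof to isolate, for each $i$, an ``elementary'' quantity $\Phi_f(a_i,a_{i+1},\ldots)$ such that $\Re(\val_f(w))$ is a weighted average of these, and then to show that among all admissible Markov words the weighted average is maximised by the constant word $\overline{2;2}$. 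The heart of the argument is therefore a pointwise/block comparison: replacing a sub-block $1_{2k}$ by $2_{2k}$ (or more precisely performing the elementary moves on the Markov tree that increase partial quotients) should not decrease the relevant elementary quantity, while simultaneously controlling the denominator $2\log\varepsilon$. This is exactly the type of block-by-block estimate carried out in \cite{BHI25} for Lyapunov exponents, and I would follow that template: reduce to finitely many ``auxiliary functions'' of one or two real variables encoding the arc contributions, and verify the required monotonicity/convexity of these auxiliary functions by the estimates deferred to the appendix.

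The main obstacle will be the monotonicity comparison at the level of the auxiliary functions, i.e.\ proving that increasing a partial quotient (within the constraints defining Markov words) pushes each local contribution in the right direction. Unlike the lower-bound argument, where one compares everything to the single smallest partial quotient $a_i=1$, here one must show the maximum over the Markov set is attained at the boundary word $\overline{2_2}$ rather than at some mixed word; a priori a clever alternation of $1$'s and $2$'s could conceivably do better, so one genuinely needs the sign of a derivative (or a discrete difference) of the auxiliary function, using only the hypothesis that $f(e^{it})$ is real, non-negative and increasing on $[\pi/3,\pi/2]$. A secondary technical point is that the geodesic of a Markov form does not lie in the standard fundamental domain, so one first has to unfold the cycle integral along the $\SL$-translates of the arc — tracking how the exponents $a_i$ determine which copies of the arc are visited — before the monotonicity of $f$ on $[\pi/3,\pi/2]$ can be applied; this unfolding, together with the verification that the all-$2$'s word maximises the resulting weighted average, is where the bulk of the work (and the appendix) will go.
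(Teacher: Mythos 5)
Your outline follows the same general philosophy as the paper (use the integral formula for $\Re(\val_f(w))$, compare against $\psi$ via auxiliary functions, and import estimates from the appendix in the spirit of \cite{BHI25}), and your combinatorial description of Markov words as concatenations of even-length runs $1_{2k}$, $2_{2k}$ is correct and is indeed what makes the argument work. However, you correctly identify the crux of the problem --- ruling out that a ``clever alternation of 1's and 2's'' beats the constant word $\overline{2_2}$ --- and then do not supply a mechanism to resolve it, which is precisely where the paper's genuine idea lives.

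Two concrete gaps. First, your proposed mechanism is a ``block replacement'' move, replacing $1_{2k}$ by $2_{2k}$. This is not how the argument goes, and it is not clear it could: replacing a sub-block changes all the tails $w_{i,m}$ globally (they are continued fractions of the whole period read from position $i$), and block replacement does not preserve membership in the Markov tree, which is generated by the $\odot$-conjunction rather than by local substitutions, so the comparison you want is not obviously well-defined inside $\M$. Second, and more importantly, you do not notice that the upper bound can be handled by the \emph{same} reduction as the lower bound: one shows $D_{w,\psi}(t)=(\hat S(w,t)+\hat S(w,\pi-t))-(\hat S(\psi,t)+\hat S(\psi,\pi-t))$ is decreasing on $[\pi/3,\pi/2]$, which by Lemma~\ref{lem: D integral} already yields the inequality for every admissible $f$ at once --- so there is no separate ``apply the monotonicity of $f$ after unfolding'' step to carry out; the unfolding and the use of monotonicity of $f$ are packaged once and for all in Lemma~\ref{lem: D integral}. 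The actual work is then purely on the auxiliary level: introduce $U(x,t):=L(x)\bigl(S_G(\psi,t)+S_H(\psi,t)\bigr)-S_L(\psi)\bigl(G(x,t)+H(x,t)\bigr)$, observe that the required inequality is $\sum_{i,m}U(w_{i,m},t)\geq 0$, and \emph{rearrange} this sum into groups of two (when $a_{i-1}=1$, $i$ odd) and four (when $a_{i-1}=2$, $i$ odd) terms of the form $U(x,t)+U(\Phi(x),t)$ and $U(x,t)+U(\Phi(x),t)+U(\Psi(x),t)+U(\Phi\Psi(x),t)$, using the stabilizers $\Phi$, $\Psi$ of $\phi$, $\psi$. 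The decisive structural fact --- which you do not use --- is that for a Markov word read from an odd index $i$, the tail $[\overline{a_i;a_{i+1},\ldots,a_{i-1}}]$ always lies in the interval $[\phi,\psi]$ (Lemma~\ref{lem:comparison_continued_fractions}), which is exactly the range on which Lemma~\ref{lem:key_prop_U} certifies each group to be non-negative. Without this grouping and the interval localization of the tails, the ``sign of a derivative/discrete difference'' you gesture at has no object to act on.
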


The main ideas of the proofs of Theorems \ref{thm:first_main} and \ref{thm:first_main} are presented in Sections \ref{sec:strategy_first_main} and \ref{sec:strategy_second_main}, respectively.

\section{A formula for $\Re(\val_f(w))$}\label{sec:formula_Revalf}

Let $f$ be a weakly holomorphic modular function for $\SL$ that is real and non-negative on the geodesic arc $\{e^{it} : \pi/3\leq t\leq 2\pi/3\}$. The function $\val_f(w)$ defined in \eqref{def_valf} 
satisfies the following properties (see, e.g., \cite{Kan09}).

\begin{lemma}\label{lem:prop_valf}
For all quadratic irrationalities $w\in \R$ we have:
    \begin{enumerate}
    \item[$(i)$] $\val_f(M(w))=\val_f(w)$ for all $M\in \SL$.
    \item[$(ii)$] $\val_f(M(w))=\overline{\val_f(w)}$ for all $M\in \GL$ with $\det(M)=-1$.
\end{enumerate}
In particular, the function $\Re(\val(w))$ is $\GL$-invariant.
\end{lemma}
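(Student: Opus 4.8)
The plan is to derive both identities by conjugating $A$ and performing a change of variables in the defining integral. Fix a hyperbolic $A\in\SL$ with attracting fixed point $w$ and repelling fixed point $\tilde w$, let $Q(x,y)=ax^2+bxy+cy^2$ be the associated sign-normalized form of discriminant $D$, and let $\varepsilon>1$ be the larger eigenvalue of $A$, so that $I_1(A)=2\log\varepsilon\in\R$. Throughout I will use that $\val_f$ is independent of the chosen hyperbolic matrix fixing $w$, which is standard (it follows from $I_f(A^n)=nI_f(A)$, $I_1(A^n)=nI_1(A)$, together with the structure of the centralizer of $A$ in $\SL$). Writing $M=\left(\begin{smallmatrix}\alpha&\beta\\\gamma&\delta\end{smallmatrix}\right)$, a short computation gives the key identity
\[
M(w)-M(\tilde w)=\frac{\det(M)\,a\,(w-\tilde w)}{Q(\delta,-\gamma)},
\]
and since $\mathrm{sgn}(a)=\mathrm{sgn}(w-\tilde w)$ by our normalization, this yields $\mathrm{sgn}(M(w)-M(\tilde w))=\mathrm{sgn}(\det M)\cdot\mathrm{sgn}(Q(\delta,-\gamma))$. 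On the other hand $Q(\delta,-\gamma)$ is exactly the leading coefficient of $Q|_{M^{-1}}$ (the form obtained by substituting the adjugate of $M$), so the sign-normalized form attached to the hyperbolic matrix $MAM^{-1}$—which has fixed points $M(w),M(\tilde w)$ and the same eigenvalues as $A$—equals $\mathrm{sgn}(\det M)\cdot Q|_{M^{-1}}$.

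For part $(i)$ we have $M\in\SL$, so $\mathrm{sgn}(\det M)=1$, and the substitution $\tau=M\sigma$ with base point $\tau_0=M\sigma_0$ in $I_f(MAM^{-1})$ works out cleanly: using $d(M\sigma)=(\gamma\sigma+\delta)^{-2}\,d\sigma$, the identity $Q|_{M^{-1}}(M\sigma,1)\,(\gamma\sigma+\delta)^2=Q(\sigma,1)$, the invariance of $D$, and the modularity $f(M\sigma)=f(\sigma)$, one obtains $I_f(MAM^{-1})=I_f(A)$. Since also $I_1(MAM^{-1})=2\log\varepsilon=I_1(A)$, dividing gives $\val_f(M(w))=\val_f(w)$.

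For part $(ii)$ we have $M\in\GL$ with $\det M=-1$, so the sign-normalized form for $MAM^{-1}$ is now $-Q|_{M^{-1}}$. The map $\sigma\mapsto M\bar\sigma$ is an anti-holomorphic bijection of $\H$ with $d(M\bar\sigma)=-(\gamma\bar\sigma+\delta)^{-2}\,d\bar\sigma$; substituting $\tau=M\bar\sigma$, $\tau_0=M\bar\sigma_0$ into $I_f(MAM^{-1})$ and using the same quadratic-form identity together with $\overline{Q(\sigma,1)}=Q(\bar\sigma,1)$, the two sign changes cancel and one is left with $I_f(MAM^{-1})=\int_{\sigma_0}^{A\sigma_0}f(M\bar\sigma)\,\sqrt{D}\,Q(\bar\sigma,1)^{-1}\,d\bar\sigma$. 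The crucial input is the reflection identity $f(M\bar\sigma)=\overline{f(\sigma)}$. Writing $M=M_0S$ with $S\in\SL$ and $M_0=\left(\begin{smallmatrix}-1&0\\0&1\end{smallmatrix}\right)$ and invoking modularity, this reduces to $f(-\bar\sigma)=\overline{f(\sigma)}$, i.e.\ to the statement that $f$ has real Fourier coefficients; and this is exactly where the hypothesis enters. Since $f$ is a weakly holomorphic modular function of weight $0$ for $\SL$, it equals $R(j)$ for some polynomial $R\in\C[x]$; since $j$ maps the arc $\{e^{it}:\pi/3\le t\le 2\pi/3\}$ onto the real interval $[0,1728]$, the hypothesis that $f$ is real on this arc forces $R$ to take real values on $[0,1728]$, hence $R$, and therefore $f$, has real coefficients. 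Granting this, $\int_{\sigma_0}^{A\sigma_0}\overline{f(\sigma)}\,\sqrt{D}\,Q(\bar\sigma,1)^{-1}\,d\bar\sigma=\overline{I_f(A)}$, so $I_f(MAM^{-1})=\overline{I_f(A)}$; dividing by the real number $I_1(MAM^{-1})=I_1(A)$ gives $\val_f(M(w))=\overline{\val_f(w)}$. The final assertion is then immediate, since every $M\in\GL$ has $\det M=\pm1$ and in both cases $\Re\val_f(M(w))=\Re\val_f(w)$.

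The step I expect to demand the most care is the sign bookkeeping in part $(ii)$: one must simultaneously track the minus sign from the anti-holomorphic differential $d(M\bar\sigma)$, the sign $\mathrm{sgn}(\det M)=-1$ coming from the normalization of the quadratic form, and the conjugation of the contour integral, and verify that they combine to produce $\overline{I_f(A)}$ rather than $-\overline{I_f(A)}$; checking this on a concrete example (say $A=\Phi^2$, $w=\phi$, $M=M_0$) is a useful safeguard. The reflection identity itself, once the arc hypothesis has been translated into reality of the Fourier coefficients of $f$, is routine, and the well-definedness of $\val_f$ that underlies everything is part of the standard setup referenced above.
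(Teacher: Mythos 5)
The paper does not prove this lemma; it simply cites it to Kaneko \cite{Kan09}, so there is no in-text argument to compare against. Your proof is correct, and it is the natural argument one would expect. The key points are all in place: the identity $M(w)-M(\tilde w)=\det(M)\,a\,(w-\tilde w)/Q(\delta,-\gamma)$ combined with the observation that $Q(\delta,-\gamma)$ is the leading coefficient of $Q|_{M^{-1}}$ correctly identifies the sign-normalized form attached to $MAM^{-1}$ as $\mathrm{sgn}(\det M)\cdot Q|_{M^{-1}}$; the holomorphic substitution $\tau=M\sigma$ handles part $(i)$; and the anti-holomorphic substitution $\tau=M\bar\sigma$ handles part $(ii)$, where you correctly note that the sign from $\det M=-1$ in the form normalization cancels the sign from $d(M\bar\sigma)=-(\gamma\bar\sigma+\delta)^{-2}d\bar\sigma$. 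The reflection identity $f(-\bar\tau)=\overline{f(\tau)}$ is indeed the crucial analytic input, and your derivation of it from the hypothesis — $f=R(j)$ with $R$ a polynomial (since $f$ is weakly holomorphic of level one, holomorphic on $\H$), real on the infinite set $[0,1728]=j(\{e^{it}:\pi/3\le t\le 2\pi/3\})$, hence with real coefficients — is sound. Two small remarks. First, you swapped the labels of attracting and repelling fixed points relative to the paper's convention (the paper takes $\tilde w$ attracting and $w$ repelling); this is harmless because the normalization $\mathrm{sgn}(a)=\mathrm{sgn}(w-\tilde w)$ and its transport under conjugation are tied to the labels $w,\tilde w$, which $M$ carries to $M(w),M(\tilde w)$ with the same dynamical roles. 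Second, the well-definedness of $\val_f$ that you invoke at the outset is genuinely a prerequisite and you are right to flag it; a one-line justification (the hyperbolic elements of $\SL$ fixing $w$ are, up to sign, powers of a primitive one, and $I_f,I_1$ both scale linearly under $A\mapsto A^n$ and are invariant under $A\mapsto A^{-1}$) would make the argument self-contained.
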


It is well known that every real quadratic irrationality is $\GL$-equivalent to a quadratic irrationality with purely periodic continued fraction expansion. Since $\Re(\val_f(w))$ is $\GL$-invariant, we can then assume that $w=[\overline{a_1;a_2,\ldots,a_\ell}]$ with $a_i\geq 1$ for all $i=1,\ldots,\ell$. Moreover, duplicating the period if necessary, we can assume that $\ell \geq 2$ is even.

In this section, we present a formula for $\Re(\val_f(w))$ that is at the core of our proofs of Theorems \ref{thm:first_main} and \ref{thm:second_main}. Following \cite[Section 2.5]{BHI25} we let $F:\R\times [\pi/3,2\pi/3]\to \R$ be defined by
\begin{equation*}
    F(x,t):=\frac{x}{1+x^2-2x\cos(t)},
\end{equation*}
and for a quadratic irrationality $w=[\overline{a_1;a_2,\ldots,a_\ell}]$ with $\ell\geq 2$ even and $t\in [\pi/3,2\pi/3]$ let
\begin{equation}\label{eq:def_S_F}
S_F(w,t):=\sum_{i=1}^{\ell}\sum_{m=1}^{a_i}F(w_{i,m},t) \quad \text{where }w_{i,m}:=[m;\overline{a_{i+1},\ldots,a_\ell,a_1,\ldots,a_i}].
\end{equation}
Defining $T:=\left(\begin{smallmatrix}
    1 & 1 \\ 0 & 1
\end{smallmatrix}\right)$ and $V:=\left(\begin{smallmatrix}
    1 & 0 \\ 1 & 1
\end{smallmatrix}\right)$ and letting $A=T^{a_1}V^{a_2}\cdots T^{a_{\ell-1}}V^{a_\ell}$ 
we have by \cite[Proposition 2.15]{BHI25} the formula
\begin{equation}\label{eq:real_part_cycle_integral}
\Re(I_f(A))=\int_{\pi/3}^{2\pi/3}f(e^{it})\sin(t)\big(S_F(w,t)+S_F(w^\op,t)\big)dt,
\end{equation}
where \(w^{\op}:=[\overline{a_\ell;a_{\ell-1},\ldots,a_1}]\).

A simple computation shows that
$$\int \sin(t)F(x,t)dt=\frac{1}{2}\log(1+x^2-2x\cos(t)).$$
This leads to the function $L:\R\to\R$ defined by
\begin{equation}\label{eq:def_L}
    L(x):=\int_{\pi/3}^{2\pi/3}\sin(u)F(x,u)du=\frac{1}{2}\log\left(\frac{1+x^2+x}{1+x^2-x}\right).
\end{equation}
We then define for $w=[\overline{a_1;a_2,\ldots,a_\ell}]$ the sum
\[S_L(w):=\sum_{i=1}^{\ell}\sum_{m=1}^{a_i}L(w_{i,m}) \qquad (w_{i,m}=[m;\overline{a_{i+1},\ldots, a_\ell,a_1,\ldots,a_{i}}])\]
and note that by \eqref{eq:real_part_cycle_integral} with $f\equiv 1$ we have
\(I_1(A)=S_L(w)+S_L(w^\op).\) 
Finally, let
\begin{equation}\label{eq:def_hat_S}
\hat{S}(w,t):=\frac{\sin(t)(S_F(w,t)+S_F(w^\op,t))}{I_1(A)}=\frac{\sin(t)(S_F(w,t)+S_F(w^\op,t))}{S_L(w)+S_L(w^\op)}.    
\end{equation}

The following proposition is the main result of this section.

\begin{prop}\label{prop:formula_re_valf}
Given $w=[\overline{a_1;a_2,\ldots,a_\ell}]$ with $\ell\geq 2$ even and $1\leq i\leq \ell$ put \(v_i:=[\overline{a_i;a_{i+1},\ldots,a_\ell,a_1,\ldots,a_{i-1}}]\). Then:
\begin{enumerate}
    \item[$(i)$]  \(S_L(w)=S_L(w^\op)=\sum\limits_{i=1}^\ell \log(v_i)=\log(\varepsilon)\)
    where $\varepsilon>1$ is the largest eigenvalue of $A=T^{a_1}V^{a_2}\cdots V^{a_\ell}$. In particular, $v_1\cdots v_\ell=\varepsilon$ and $I_1(A)=2\sum\limits_{i=1}^\ell \log(v_i)$.
    \item[$(ii)$] 
    \(\Re(\val_f(w))=\frac{1}{2}\int\limits_{\pi/3}^{2\pi/3}f(e^{it})\left(\hat{S}(w,t)+\hat{S}(w,\pi-t)\right)dt.\)   
\end{enumerate}
\end{prop}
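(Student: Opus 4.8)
The plan is to verify part $(i)$ first since part $(ii)$ follows from it combined with the already-established formula \eqref{eq:real_part_cycle_integral}. For part $(i)$, the key observation is that the continued fraction expansions $v_i=[\overline{a_i;a_{i+1},\ldots,a_{i-1}}]$ (indices read cyclically modulo $\ell$) are precisely the attracting fixed points of the cyclic conjugates of the matrix $A=T^{a_1}V^{a_2}\cdots T^{a_{\ell-1}}V^{a_\ell}$. First I would recall the standard identity $\log\!\left(\frac{1+x^2+x}{1+x^2-x}\right)=\log\!\left(\frac{|x+\rho|^2}{|x+\bar\rho|^2}\right)$ where $\rho=e^{i\pi/3}$, so that the telescoping structure of $L$ becomes visible: summing $L$ over the ``expansion digits'' of a periodic continued fraction should collapse, via a Möbius-transformation change of variables tracking how $T$ and $V$ act on the fixed point, to $\log(v_i)$ for each block. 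Concretely, writing $w_{i,m}=[m;\overline{a_{i+1},\ldots}]$, the inner sum $\sum_{m=1}^{a_i}L(w_{i,m})$ should be shown equal to $\log(v_i)-\log(\tilde v_{i})$ or a similar difference of logarithms of consecutive ``states'', and the cyclic sum over $i$ then telescopes to $\sum_{i=1}^\ell\log(v_i)$. That this equals $\log(\varepsilon)$ is the classical fact that $\varepsilon=\prod_{i}v_i$ for the eigenvalue of a product of the continued-fraction generators; alternatively one invokes $I_1(A)=2\log(\varepsilon)$ from Section \ref{sec:cycle_integrals} together with the relation $I_1(A)=S_L(w)+S_L(w^\op)$ noted just before the proposition, which forces $S_L(w)=S_L(w^\op)=\log(\varepsilon)$ once one knows the two summands are equal. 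The equality $S_L(w)=S_L(w^\op)$ itself is the symmetry of $L$ under $x\mapsto 1/x$ applied digitwise, since reversing the period of a continued fraction inverts each $v_i$ and $L(1/x)=-L(x)$... wait, that gives $S_L(w^\op)=-S_L(w)$, so more care is needed: the correct statement is that reversing the period permutes the $v_i$'s up to the $\GL$-action and one must use that $w^\op$ has the same set of $\log|v_i|$ contributions; I would argue this via $(ii)$ of Lemma \ref{lem:prop_valf} applied to $f\equiv 1$, giving $\Re(I_1(A))=I_1(A)$ is already real and symmetric.

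For part $(ii)$, I would start from \eqref{eq:real_part_cycle_integral}, namely
\[
\Re(I_f(A))=\int_{\pi/3}^{2\pi/3}f(e^{it})\sin(t)\big(S_F(w,t)+S_F(w^\op,t)\big)\,dt,
\]
divide by $I_1(A)=S_L(w)+S_L(w^\op)$ using the definition \eqref{def_valf} of $\val_f(w)$, and recognize the integrand as $f(e^{it})\hat S(w,t)$ by definition \eqref{eq:def_hat_S}. This yields $\Re(\val_f(w))=\int_{\pi/3}^{2\pi/3}f(e^{it})\hat S(w,t)\,dt$. To obtain the symmetrized form in $(ii)$, I would split the interval at $\pi/2$ and substitute $t\mapsto \pi-t$ on the half $[\pi/2,2\pi/3]$; since $f$ is modular and invariant under $\tau\mapsto -1/\tau$, and $e^{i(\pi-t)}=-e^{-it}=-1/e^{it}$ maps the arc to itself, one has $f(e^{i(\pi-t)})=f(e^{it})$, while $\sin(\pi-t)=\sin(t)$, so the two halves combine to give $\frac12\int_{\pi/3}^{2\pi/3}f(e^{it})\big(\hat S(w,t)+\hat S(w,\pi-t)\big)\,dt$ over the full interval. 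The factor $\frac12$ appears because after symmetrizing we integrate the averaged integrand over the whole range rather than each piece over its half.

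The main obstacle I anticipate is the telescoping computation in part $(i)$: one must choose the right intermediate quantities so that $\sum_{m=1}^{a_i}L(w_{i,m})$ collapses cleanly. The natural candidate is to note $1+x^2-2x\cos t$ at $t=\pi/3$ and $t=2\pi/3$ gives the two factors $1+x^2\mp x$, and that under $x\mapsto 1/(m+x)$ — the map realizing the continued-fraction step — these transform multiplicatively in a way that makes $L$ a coboundary for the relevant cocycle; tracking this through a full period of length $\ell$ and matching with $\log\varepsilon=\log\prod v_i$ is where the bookkeeping is delicate, but it is exactly the kind of computation carried out in \cite[Section 2.5]{BHI25}, so I would model the argument closely on that reference. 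Everything else — the reduction in $(ii)$, the symmetrization, the identification of $\varepsilon$ with the eigenvalue — is routine once $(i)$ is in place.
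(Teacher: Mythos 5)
Your overall plan for part $(ii)$ is correct and coincides with the paper's (divide \eqref{eq:real_part_cycle_integral} by $I_1(A)=S_L(w)+S_L(w^\op)$, then symmetrize via $t\mapsto\pi-t$ using $\sin(\pi-t)=\sin t$ and $f(e^{i(\pi-t)})=f(-1/e^{it})=f(e^{it})$); the paper merely compresses this into one line.

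Part $(i)$, however, has concrete gaps. First, your claim $L(1/x)=-L(x)$ is false: from \eqref{eq:def_L}, $L(1/x)=\frac12\log\!\big(\frac{1+x^{-2}+x^{-1}}{1+x^{-2}-x^{-1}}\big)=\frac12\log\!\big(\frac{x^2+1+x}{x^2+1-x}\big)=L(x)$, so $L$ is \emph{even} under $x\mapsto 1/x$. Your detour through Lemma \ref{lem:prop_valf}$(ii)$ with $f\equiv 1$ does not repair this: $I_1(A)$ is a real number by inspection, and the lemma gives no information about how $I_1(A)$ splits as $S_L(w)+S_L(w^\op)$, so it cannot be used to conclude $S_L(w)=S_L(w^\op)$. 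In fact you do not need such a symmetry at all: the efficient route, which is the paper's, is to prove the single identity $S_L(w)=\sum_{i=1}^\ell\log(v_i)=\log\varepsilon$ (telescoping as you outline, using $\frac{1+x+x^2}{1-(1+x^{-1})+(1+x^{-1})^2}=x^2$), and then $S_L(w^\op)=\log\varepsilon$ follows for free from $I_1(A)=2\log\varepsilon=S_L(w)+S_L(w^\op)$ without any separate symmetry argument. Second, you assert $v_1\cdots v_\ell=\varepsilon$ as ``classical'' without proof; the paper actually proves it by computing $(A^{-1})'(w)=\varepsilon^2$, writing $A^{-1}$ as a product of the maps $z\mapsto 1/(z-a_i)$, and applying the chain rule to obtain $(v_1\cdots v_\ell)^2=\varepsilon^2$. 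You should either supply that argument or locate a precise citation; invoking it as folklore leaves the chain $\sum\log v_i=\log\varepsilon$ unjustified, and that equality is what makes the rest of $(i)$ close up.

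Finally, you flag the telescoping sum $\sum_{m=1}^{a_i}L(w_{i,m})$ as the ``main obstacle'' and propose to defer it to \cite{BHI25}; this step is actually short and self-contained (the numerator of $L(w_{i,m})$ is the denominator of $L(w_{i,m}+1)=L(w_{i,m+1})$ shifted by one, so the inner sum collapses to boundary terms), and it is better to carry it out than to outsource it, since the referenced computation in \cite{BHI25} is for a different quantity.
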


\begin{remark}
\begin{enumerate}
    \item An identity analogue to $v_1\cdots v_\ell=\varepsilon$ but using negative continued fraction expansions can be found in \cite[Equation~$(6.4)$]{Zag75}.
    \item The formula for $\Re(\val_f(w))$ in Proposition \ref{prop:formula_re_valf}$(ii)$ involves an integrand that is invariant under $t\mapsto \pi-t$. This formula is convenient for our method of proof of Theorems \ref{thm:first_main} and \ref{thm:second_main}.  
\end{enumerate}
\end{remark}

\begin{proof}[Proof of Proposition \ref{prop:formula_re_valf}]
By definition of $L(x)$ (see \eqref{eq:def_L}) we have
\begin{equation*}
S_L(w)=\frac{1}{2}\sum_{i=1}^{\ell}\sum_{m=1}^{a_i}\log\left(  \frac{1+w_{i,m}+w_{i,m}^2}{1-w_{i,m}+w_{i,m}^2}\right)=\frac{1}{2}\sum_{i=1}^{\ell}\sum_{m=1}^{a_i}\log\left(  \frac{1-(w_{i,m}+1)+(w_{i,m}+1)^2}{1-w_{i,m}+w_{i,m}^2}\right).
\end{equation*}
Recognizing that the inner sum is telescopic, we arrive to
\[S_L(w)=\frac{1}{2}\sum_{i=1}^\ell\log\left(  \frac{1+[a_i;\overline{a_{i+1},\ldots,a_i}]+[a_i;\overline{a_{i+1},\ldots,a_i}]^2}{1-[1;\overline{a_{i+1},\ldots,a_i}]+[1;\overline{a_{i+1},\ldots,a_i}]^2}\right).\]
This equals
\[\frac{1}{2}\sum_{i=1}^\ell \log \left(\frac{1+v_i+v_i^2}{1-(1+1/v_{i+1})+(1+1/v_{i+1})^2}\right)=\frac{1}{2}\sum_{i=1}^\ell \log \left(\frac{1+v_i+v_i^2}{1-(1+1/v_i)+(1+1/v_i)^2}\right).\]
Using that
\(\frac{1+x+x^2}{1-(1+x^{-1})+(1+x^{-1})^2}=x^2\) 
we conclude \(S_L(w)=\sum\limits_{i=1}^\ell \log(v_i)\) as desired. We now claim that
\begin{equation}\label{product_formula_for_epsilon}
    v_1\cdots v_\ell=\varepsilon.
\end{equation}
In order to prove this, note that $w=[\overline{a_1;\ldots,a_\ell}]$ is the attracting fixed point of $A=T^{a_1}\cdots V^{a_\ell}$. If we put $M:=\left(\begin{smallmatrix}
    1 & -w \\ 1 & -\tilde{w}
\end{smallmatrix}\right)$ then $MAM^{-1}$ fixes $0$ and $\infty$ and has $0$ as attracting fixed point. It follows that
\(MAM^{-1}= \pm \left(\begin{smallmatrix}
    \varepsilon^{-1} & 0 \\ 0 & \varepsilon
\end{smallmatrix}\right)\)
and 
\((MAM^{-1})'(0)= \varepsilon^{-2},\)
or equivalently $M'(w)A'(w)(M^{-1})'(0)=A'(w)=\varepsilon^{-2}$. This implies that $(A^{-1})'(w)=\varepsilon^{2}$. Letting $\tilde{S}:=\left(\begin{smallmatrix}
    0 & 1 \\ 1 & 0
\end{smallmatrix}\right)$ we can write \(V^aT^b=(\tilde{S}T^a)(\tilde{S}T^b)\) for all $a,b\in \Z$ to get
$$A^{-1}=V^{-a_\ell}T^{-a_{\ell-1}}\cdots V^{-a_2}T^{-a_1}=(\tilde{S}T^{-a_{\ell}})\cdots (\tilde{S}T^{-a_2})(\tilde{S}T^{-a_1}).$$
Using that $\tilde{S}T^{-a}(z)=\frac{1}{z-a}$ and $(\tilde{S}T^{-a})'(z)=-\frac{1}{(z-a)^2}$ we deduce from $(A^{-1})'(w)=\varepsilon^{2}$, and the chain rule for derivatives, the equality
\((-v_1^2)(-v_\ell)^2\cdots (-v_3)^2(-v_2)^2=\varepsilon^{2}.\)
Since $\ell$ is even, we get
\((v_1v_2\cdots v_\ell)^2=\varepsilon^{2}.\)
But $v_1,\ldots,v_\ell>0$ and $\varepsilon>0$, thus 
\(v_1v_2\cdots v_\ell=\varepsilon.\)
This proves \eqref{product_formula_for_epsilon}. Now, using 
\(2\log(\varepsilon)=I_1(A)=S_L(w)+S_L(w^{\op})\) and $S_L(w)=\log(\varepsilon)$, we obtain $S_L(w^\op)=\log(\varepsilon)$. This proves item $(i)$. Finally, by \eqref{eq:real_part_cycle_integral} and the definition of $\hat{S}(w,t)$ in \eqref{eq:def_hat_S} we have
\[\Re(\val_f(w))=\int_{\pi/3}^{2\pi/3}f(e^{it})\hat{S}(w,t).\]
Then item $(ii)$ follows from a duplication and a simple change of variables. This completes the proof of the proposition.
\end{proof}

\begin{example}
For the golden ratio we have $\phi^\op=\phi=[\overline{1;1}]$ hence
$\hat{S}(\phi,t)=\frac{\sin(t)F(\phi,t)}{L(\phi)}$. See Figure \ref{fig:plot_S_golden_ratio} for a  comparison between the plots of $t\mapsto \hat{S}(\phi,t)$ and $t\mapsto \frac{1}{2}\big(\hat{S}(\phi,t)+\hat{S}(\phi,\pi-t)\big)$ for $t\in [\pi/3,2\pi/3]$.

\begin{figure}[h!]
    \centering
    \includegraphics[width=0.6\linewidth]{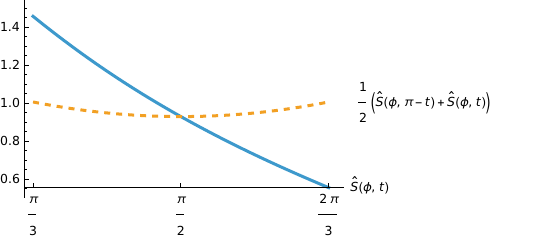}
    \caption{Plots of $\hat{S}(\phi,t)$ and $\frac{1}{2}\big(\hat{S}(\phi,t)+\hat{S}(\phi,\pi-t)\big)$ for $t\in [\pi/3,2\pi/3]$.}
    \label{fig:plot_S_golden_ratio}
\end{figure}
\end{example}

\section{Strategy of proof for Theorem \ref{thm:first_main}}\label{sec:strategy_first_main}

Let $w:=[\overline{a_1;a_2,\ldots,a_\ell}]$ with $\ell\geq 2$ even. Using Proposition \ref{prop:formula_re_valf}$(ii)$, in order to prove $\Re(\val_f(w))\geq \val_f(\phi)$ we need to show that
\begin{equation}\label{eq: goal ineq golden ratio is min}
\int_{\pi/3}^{2\pi/3}f(e^{it})\big(\hat{S}(w,t)+\hat{S}(w,\pi-t)\big)dt\geq \int_{\pi/3}^{2\pi/3}f(e^{it})\big(\hat{S}(\phi,t)+\hat{S}(\phi,\pi-t)\big)dt.
\end{equation}
Let
\begin{equation}\label{eq:def_D_phi_w}
 D_{\phi,w}(t):=\big(\hat{S}(\phi,t)+\hat{S}(\phi,\pi-t)\big)-\big(\hat{S}(w,t)+\hat{S}(w,\pi-t)\big).   
\end{equation}
Clearly $D_{\phi,w}(\pi-t)=D_{\phi,w}(t)$. Also, note that by definition of $\hat{S}(w,t)$ we have
\(\int_{\pi/3}^{2\pi/3}\hat{S}(w,t)dt=1\) for all \(w\). This implies 
\(\int_{\pi/3}^{2\pi/3}D_{\phi,w}(t)dt=0.\)
Finally, observe that \eqref{eq: goal ineq golden ratio is min} is equivalent to
\begin{equation}\label{eq: goal ineq golden ratio is min v2}
   \int_{\pi/3}^{2\pi/3}f(e^{it})D_{\phi,w}(t)dt\leq 0. 
\end{equation}
We will prove below (Sections \ref{sec:first_step_first_main_thm} to \ref{sec:fourth_step_first_main_thm}) that $D_{\phi,w}(t)$ is monotonic decreasing for $t\in [\pi/3,\pi/2]$. Then, the inequality \eqref{eq: goal ineq golden ratio is min v2}, and hence \eqref{eq: goal ineq golden ratio is min} and Theorem \ref{thm:first_main}, are consequence of the following simple lemma by choosing $g(t)=f(e^{it})$ and $D(t)=D_{\phi,w}(t)$.

\begin{lemma}\label{lem: D integral}
    Let $D:[\pi/3,2\pi/3]\to \mathbb{R}$ be a continuous function that satisfies $D(\pi-t)=D(t)$, is decreasing for $t\in[\pi/3,\pi/2]$ and has $\int_{\pi/3}^{2\pi/3}D(t)dt=0$. Let $g:[\pi/3,2\pi/3]\to \mathbb{R}$ be a non-negative continuous function that satisfies $g(\pi -t)=g(t)$ and is increasing for  $t\in [\pi/3,\pi/2]$.  Then
    \(\int_{\pi/3}^{2\pi/3}g(t)D(t)dt\leq 0.\)
\end{lemma}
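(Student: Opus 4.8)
The plan is to exploit the symmetry $D(\pi-t)=D(t)=D(t)$ and $g(\pi-t)=g(t)$ to fold the integral onto the half-interval $[\pi/3,\pi/2]$, and then use the monotonicity hypotheses there. Writing $s:=\pi/2$ for the midpoint, the substitution $t\mapsto \pi-t$ on the piece $[\pi/2,2\pi/3]$ gives
\[\int_{\pi/3}^{2\pi/3}g(t)D(t)\,dt=2\int_{\pi/3}^{\pi/2}g(t)D(t)\,dt,\]
and likewise $\int_{\pi/3}^{2\pi/3}D(t)\,dt=2\int_{\pi/3}^{\pi/2}D(t)\,dt=0$, so in fact $\int_{\pi/3}^{\pi/2}D(t)\,dt=0$. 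Thus it suffices to show $\int_{\pi/3}^{\pi/2}g(t)D(t)\,dt\le 0$ for $g$ non-negative and increasing and $D$ decreasing with zero mean on $[\pi/3,\pi/2]$.

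The key step is a "single crossing" argument. Since $D$ is decreasing on $[\pi/3,\pi/2]$ and has zero integral there, either $D\equiv 0$ (in which case the inequality is trivial), or there is a point $c\in(\pi/3,\pi/2)$ with $D(t)\ge 0$ for $t\le c$ and $D(t)\le 0$ for $t\ge c$. Now compare $g$ to the constant $g(c)$: on $[\pi/3,c]$ we have $g(t)\le g(c)$ and $D(t)\ge 0$, so $g(t)D(t)\le g(c)D(t)$; on $[c,\pi/2]$ we have $g(t)\ge g(c)$ and $D(t)\le 0$, so again $g(t)D(t)\le g(c)D(t)$. Integrating over $[\pi/3,\pi/2]$ yields
\[\int_{\pi/3}^{\pi/2}g(t)D(t)\,dt\le g(c)\int_{\pi/3}^{\pi/2}D(t)\,dt=0,\]
which is exactly what we need. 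Multiplying by $2$ gives the claim on the full interval.

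The only genuinely delicate point is justifying the existence of the crossing point $c$ and the sign pattern of $D$ — i.e.\ that a decreasing continuous function of zero mean on an interval cannot be always positive or always negative (clear, since the integral would be strictly positive or strictly negative), and that once it changes sign it stays non-positive (clear from monotonicity). One should also handle the degenerate cases where $D$ is non-negative throughout with a zero only at an endpoint, but these are absorbed by allowing $c$ to be an endpoint; no real obstacle arises. Everything else is routine, so I expect no substantial difficulty; the proof is short.

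\begin{proof}[Proof of Lemma \ref{lem: D integral}]
Using $D(\pi-t)=D(t)$ and $g(\pi-t)=g(t)$, the substitution $t\mapsto \pi-t$ on $[\pi/2,2\pi/3]$ gives
\[\int_{\pi/3}^{2\pi/3}g(t)D(t)\,dt=2\int_{\pi/3}^{\pi/2}g(t)D(t)\,dt,\qquad 0=\int_{\pi/3}^{2\pi/3}D(t)\,dt=2\int_{\pi/3}^{\pi/2}D(t)\,dt.\]
Hence $\int_{\pi/3}^{\pi/2}D(t)\,dt=0$, and it suffices to prove $\int_{\pi/3}^{\pi/2}g(t)D(t)\,dt\le 0$. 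If $D$ vanishes identically on $[\pi/3,\pi/2]$ this is trivial, so assume not. Since $D$ is continuous and decreasing on $[\pi/3,\pi/2]$ with zero integral there, it cannot be everywhere positive nor everywhere negative, so $D(\pi/3)\ge 0\ge D(\pi/2)$; as $D$ is decreasing there exists $c\in[\pi/3,\pi/2]$ with $D(t)\ge 0$ for $t\in[\pi/3,c]$ and $D(t)\le 0$ for $t\in[c,\pi/2]$. For $t\in[\pi/3,c]$ we have $0\le g(t)\le g(c)$ and $D(t)\ge 0$, so $g(t)D(t)\le g(c)D(t)$; for $t\in[c,\pi/2]$ we have $g(t)\ge g(c)\ge 0$ and $D(t)\le 0$, so again $g(t)D(t)\le g(c)D(t)$. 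Integrating,
\[\int_{\pi/3}^{\pi/2}g(t)D(t)\,dt\le g(c)\int_{\pi/3}^{\pi/2}D(t)\,dt=0.\]
Therefore $\int_{\pi/3}^{2\pi/3}g(t)D(t)\,dt=2\int_{\pi/3}^{\pi/2}g(t)D(t)\,dt\le 0$.
\end{proof}
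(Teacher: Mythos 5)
Your proof is correct and takes essentially the same route as the paper: reduce by symmetry to $[\pi/3,\pi/2]$, locate a crossing point $c$ of $D$ (called $t_0$ in the paper), and compare $g$ pointwise against the constant $g(c)$ on each side of $c$ before invoking $\int_{\pi/3}^{\pi/2}D=0$. Your presentation is, if anything, a touch more streamlined, since you fold the two one-sided estimates into the single pointwise inequality $g(t)D(t)\le g(c)D(t)$ on all of $[\pi/3,\pi/2]$ rather than chaining two separate integral bounds.
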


The proof of Lemma \ref{lem: D integral} is given in the appendix. In conclusion, the proof of Theorem \ref{thm:first_main} reduces to showing that $D_{\phi,w}(t)$ defined in \eqref{eq:def_D_phi_w} is decreasing for $t
\in [\pi/3,\pi/2]$. 

\begin{example}
    When $w=\frac{1+\sqrt{3}}{2}=[\overline{1;2}]$ a plot of $D_{\phi,w}(t)$ is shown in Figure \ref{fig:plot_D_phi,w}.

\begin{figure}[h!]
    \centering
    \includegraphics[width=0.5\linewidth]{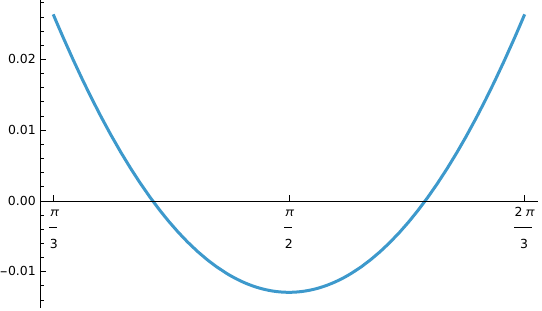}
    \caption{Plot of $D_{\phi,w}(t)$ for $t\in [\pi/3,2\pi/3]$ when $w=\frac{1+\sqrt{3}}{2}=[\overline{1;2}]$.}
    \label{fig:plot_D_phi,w}
\end{figure}
\end{example}

We divide the proof of the fact that $D_{\phi,w}(t)$ is decreasing for $t
\in [\pi/3,\pi/2]$ into four steps.

\subsection{First step: a reduction}\label{sec:first_step_first_main_thm}

We have to check that $\partial_tD_{\phi,w}(t)\geq 0$ for $t\in [\pi/3,\pi/2]$. The computation of the derivative $\partial_tD_{\phi,w}(t)$ leads to the functions
\begin{eqnarray*}
   G(x,t)&:=&\partial_t\big(\sin(t)F(x,t)\big)=\frac{x ((x^2 + 1) \cos(t) - 2 x))}{(x^2 + 1-2 x \cos(t))^2}, \\
   H(x,t)&:=&\partial_t\big(\sin(\pi-t)F(x,\pi-t)\big)=\frac{x ((x^2 + 1) \cos(t) + 2 x))}{(x^2 + 1+2 x \cos(t))^2}.
\end{eqnarray*}
Note that $H(x,t)=-G(x,\pi-t)$. For $w=[\overline{a_1;a_2,\ldots,a_\ell}]$ we then let
\begin{eqnarray*}
   S_G(w,t)&:=&\sum_{i=1}^{\ell}\sum_{m=1}^{a_i}G(w_{i,m},t), \\
S_H(w,t)&:=&\sum_{i=1}^{\ell}\sum_{m=1}^{a_i}H(w_{i,m},t),
\end{eqnarray*}
where $w_{i,m}=[m;\overline{a_{i+1},\ldots, a_\ell,a_1,\ldots,a_{i}}]$ as usual.

For later use we define here for two quadratic irrationals $w_1,w_2\in \R$ with purely periodic continued fraction expansions and $t\in \R$ the function
\begin{equation*}
 D_{w_1,w_2}(t):=\big(\hat{S}(w_1,t)+\hat{S}(w_1,\pi-t)\big)-\big(\hat{S}(w_2,t)+\hat{S}(w_2,\pi-t)\big).   
\end{equation*}

\begin{lemma}\label{lem:D_w1,w2_decreasing_criterion}
Let $w_1,w_2$ be two real quadratic irrationalities with purely periodic continued fraction expansion. Assume that for all $t\in [\pi/3,\pi/2)$ the two inequalities
\begin{eqnarray}\label{eq: two ineq 1}
S_L(w_2)[S_G(w_1,t)+S_H(w_1,t)] &\leq& S_L(w_1)[S_G(w_2,t)+S_H(w_2,t)], \\
S_L(w_2)[S_G(w^\op_1,t)+S_H(w^\op_1,t)] &\leq& S_L(w_1)[S_G(w^\op_2,t)+S_H(w^\op_2,t)], \label{eq: two ineq 2}
\end{eqnarray}
hold. Then, $D_{w_1,w_2}(t)$ is decreasing for $t\in [\pi/3,\pi/2]$. 
\end{lemma}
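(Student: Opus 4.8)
The plan is to unwind the definitions and reduce the monotonicity of $D_{w_1,w_2}(t)$ on $[\pi/3,\pi/2]$ to the two displayed inequalities by a direct differentiation-and-quotient-rule argument. First I would write, using \eqref{eq:def_hat_S} and Proposition \ref{prop:formula_re_valf}$(i)$, the explicit formula
\[
\hat S(w,t)+\hat S(w,\pi-t)=\frac{\sin(t)\big(S_F(w,t)+S_F(w^\op,t)\big)+\sin(\pi-t)\big(S_F(w,\pi-t)+S_F(w^\op,\pi-t)\big)}{2S_L(w)},
\]
since $S_L(w)=S_L(w^\op)=\log(\varepsilon)$ so that the denominator $I_1(A)=2S_L(w)$. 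Differentiating in $t$ and recalling that $G(x,t)=\partial_t(\sin(t)F(x,t))$ and $H(x,t)=\partial_t(\sin(\pi-t)F(x,\pi-t))$, the numerator's derivative becomes $S_G(w,t)+S_G(w^\op,t)+S_H(w,t)+S_H(w^\op,t)$. Thus
\[
\partial_t\big(\hat S(w,t)+\hat S(w,\pi-t)\big)=\frac{\big(S_G(w,t)+S_H(w,t)\big)+\big(S_G(w^\op,t)+S_H(w^\op,t)\big)}{2S_L(w)}.
\]

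Next I would show $D_{w_1,w_2}(t)$ is decreasing on $[\pi/3,\pi/2]$ by checking $\partial_t D_{w_1,w_2}(t)\le 0$ there. By the previous display,
\[
\partial_t D_{w_1,w_2}(t)=\frac{\Sigma_G(w_1,t)+\Sigma_G(w_1^\op,t)}{2S_L(w_1)}-\frac{\Sigma_G(w_2,t)+\Sigma_G(w_2^\op,t)}{2S_L(w_2)},
\]
where I abbreviate $\Sigma_G(w,t):=S_G(w,t)+S_H(w,t)$. Multiplying through by the positive quantity $2S_L(w_1)S_L(w_2)$ (both $S_L$ values are positive since $\varepsilon>1$), the inequality $\partial_t D_{w_1,w_2}(t)\le 0$ is equivalent to
\[
S_L(w_2)\big[\Sigma_G(w_1,t)+\Sigma_G(w_1^\op,t)\big]\le S_L(w_1)\big[\Sigma_G(w_2,t)+\Sigma_G(w_2^\op,t)\big].
\]
Adding the two hypotheses \eqref{eq: two ineq 1} and \eqref{eq: two ineq 2} gives exactly this, so we conclude $\partial_t D_{w_1,w_2}(t)\le 0$ on $[\pi/3,\pi/2)$, and by continuity on the closed interval, hence $D_{w_1,w_2}$ is decreasing there.

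The argument is essentially a bookkeeping computation once the formula for $\partial_t\hat S$ is in place, so I do not anticipate a genuine obstacle here; the one point requiring a little care is the justification that differentiation under the (finite) sums defining $S_F$ is legitimate for $t$ in the open interval — which is immediate since $F(x,\cdot)$ is smooth there and the sums are finite — together with noting that the denominators $x^2+1-2x\cos t$ appearing in $F,G,H$ do not vanish for $x=w_{i,m}\ge 1$ and $t\in[\pi/3,\pi/2]$ (indeed $x^2+1-2x\cos t\ge x^2+1-x=(x-\tfrac12)^2+\tfrac34>0$). A second minor point is that the hypotheses are stated on the half-open interval $[\pi/3,\pi/2)$ while the conclusion is on the closed interval $[\pi/3,\pi/2]$; this is handled by continuity of $D_{w_1,w_2}$. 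I would present the proof as: (1) derive the closed-form derivative of $\hat S(w,t)+\hat S(w,\pi-t)$ using Proposition \ref{prop:formula_re_valf}$(i)$ and the definitions of $G,H$; (2) express $\partial_t D_{w_1,w_2}$ as a difference of two such terms; (3) clear positive denominators and observe that the resulting inequality is precisely the sum of \eqref{eq: two ineq 1} and \eqref{eq: two ineq 2}; (4) invoke continuity to pass to the closed interval.
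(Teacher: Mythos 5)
Your argument is correct and follows essentially the same route as the paper: compute $\partial_t D_{w_1,w_2}$ via the definitions of $\hat S$, $G$, $H$ and Proposition~\ref{prop:formula_re_valf}$(i)$, clear the positive denominators $2S_L(w_1)$, $2S_L(w_2)$, and observe that the resulting inequality is the sum of \eqref{eq: two ineq 1} and \eqref{eq: two ineq 2}. The only cosmetic difference is at the endpoint $t=\pi/2$, which the paper handles by noting $G(x,\pi/2)+H(x,\pi/2)=0$ so the inequality is trivially an equality there, whereas you pass to the closed interval by continuity of $D_{w_1,w_2}$; both are fine.
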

\begin{proof}
By definition of $D_{w_1,w_2}(t)$ and $\hat{S}(w,t)$ we have 
\begin{eqnarray*}
 \partial_tD_{w_1,w_2}(t)&=&  \left(\frac{S_G(w_1,t)+S_G(w_1^\op,t)+S_H(w_1,t)+S_H(w_1^\op,t)}{2S_L(w_1)}\right)\\
 & & \qquad -\left(\frac{S_G(w_2,t)+S_G(w_2^\op,t)+S_H(w_2,t)+S_H(w_2^\op,t)}{2S_L(w_2)}\right).
\end{eqnarray*}
For $t\in [\pi/3,\pi/2]$ it follows that $\partial_tD_{w_1,w_2}(t)\leq 0$ is equivalent to
\begin{align*}
 & S_L(w_2)\Big(S_G(w_1,t)+S_G(w_1^\op,t)+S_H(w_1,t)+S_H(w_1^\op,t)\Big)\\  
 & \qquad \qquad \qquad \qquad  \leq   S_L(w_1)\Big(S_G(w_2,t)+S_G(w_2^\op,t)+S_H(w_2,t)+S_H(w_2^\op,t)\Big).
\end{align*}
For $t=\pi/2$ we have $G(x,t)+H(x,t)=0$ hence this inequality holds trivially, hence we can assume $t\in [\pi/3,\pi/2)$. Then the desired inequality is a consequence of \eqref{eq: two ineq 1} together with \eqref{eq: two ineq 2}. This proves the lemma.
\end{proof}

\subsection{Second step: good and bad terms}\label{sec:second_step_first_main_thm}

For $w_1=\phi$ and $w_2=w$ the inequality \eqref{eq: two ineq 1} becomes
\begin{eqnarray*}
S_L(w)\Big(S_G(\phi,t)+S_H(\phi,t)\Big) &\leq &S_L(\phi)\Big(S_G(w,t)+S_H(w,t)\Big).
\end{eqnarray*}
Using that the terms $\phi_{i,m}$ defined in \eqref{eq:def_S_F} are just $\phi_{1,1}=\phi$ and $\phi_{2,1}=\phi-1=\frac{1}{\phi}$ and the functions $G(x,t),H(x,t),L(x)$ are invariant under $x\mapsto 1/x$, this inequality becomes
\begin{eqnarray*}
S_L(w)\Big(G(\phi,t)+H(\phi,t)\Big) &\leq &L(\phi)\Big(S_G(w,t)+S_H(w,t)\Big).
\end{eqnarray*}
The above inequality can be rewritten as
\begin{equation}\label{eq:goal_first_thm}
\Big(G(\phi,t)+H(\phi,t)\Big)\sum_{i=1}^{\ell}\sum_{m=1}^{a_i}L(w_{i,m}) \leq L(\phi)\sum_{i=1}^{\ell}\sum_{m=1}^{a_i}\Big(G(w_{i,m},t)+H(w_{i,m},t)\Big).
\end{equation}
Similarly, \eqref{eq: two ineq 2} becomes
\[\Big(G(\phi,t)+H(\phi,t)\Big)\sum_{i=1}^{\ell}\sum_{m=1}^{a_i}L((w^\op)_{i,m}) \leq L(\phi)\sum_{i=1}^{\ell}\sum_{m=1}^{a_i}\Big(G((w^\op)_{i,m},t)+H((w^\op)_{i,m},t)\Big).\]
Since $w^{\op}$ is just another quadratic irrationality with purely periodic continued fraction expansion, it is enough to check \eqref{eq:goal_first_thm} for all possible $w$.

We now introduce the following definition: we say that a term $w_{i,m}$ is \emph{good} if
$$\Big(G(\phi,t)+H(\phi,t)\Big)L(w_{i,m})\leq L(\phi)\Big(G(w_{i,m},t)+H(w_{i,m},t)\Big) \quad \text{for all }t\in [\pi/3,\pi/2),$$
and we say it is \emph{bad} otherwise.

\subsection{Third step: the function $Z(x,t)$}\label{sec:third_step_first_main_thm}

The definition of bad and good terms naturally leads to the function
\begin{equation}\label{eq:def_Z(x,t)}
    Z(x,t):= L(\phi)\big(G(x,t)+H(x,t)\big)-\big(G(\phi,t)+H(\phi,t)\big)L(x).
\end{equation}
Indeed, a term $w_{i,m}$ associated to $w$ is good if $Z(w_{i,m},t)\geq 0$ for all $t\in[\pi/3,\pi/2)$ and it is bad otherwise. Note that \eqref{eq:goal_first_thm} is equivalent to
\begin{equation}\label{eq:goal2_first_thm}
\sum_{i=1}^{\ell}\sum_{m=1}^{a_i}Z(w_{i,m},t)\geq 0.
\end{equation}
Figure \ref{fig:plot_Z} shows the plots of $x\mapsto Z(x,t)$ for $t\in \{\pi/3,5\pi/12,\pi/2\}$. Observe that $Z(x,\pi/2)\equiv 0$.

\begin{figure}[h!]
    \centering
    \includegraphics[width=0.6\linewidth]{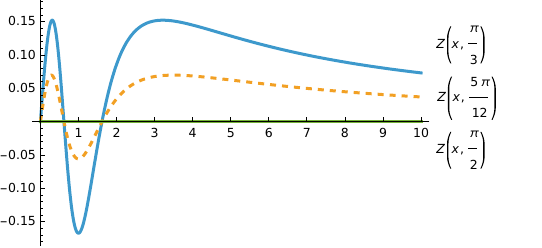}
    \caption{Plots of $x\mapsto Z(x,t)$ for $t\in \{\pi/3,5\pi/12,\pi/2\}$.}
    \label{fig:plot_Z}
\end{figure}

We have the following properties of $Z(x,t)$ whose proofs are postponed to the appendix.

\begin{lemma}\label{lem:first_prop_Z(x,t)}
    For every $t\in [\pi/3,\pi/2)$ the following properties hold:
    \begin{enumerate}
        \item[$(i)$] $Z(1,t)<0$, $Z(\phi,t)=0$ and $Z(x,t)\to 0$ as $x\to \infty$. 
        \item[$(ii)$] There exists a unique $x_t\in (3,4)$ such that the function $x\mapsto Z(x,t)$ is strictly increasing in $[1,x_t]$ and strictly decreasing in $[x_t,\infty)$. 
         \item[$(iii)$] $Z(1,t)+Z(2,t)+\min\{Z(3,t),Z(4,t)\}\geq 0$.
        \item[$(iv)$] $Z(x,t)+Z\left(\frac{1}{x-1},t\right)\geq 0$ for all $x\in \left[\frac{4}{3},\phi\right]$.
    \end{enumerate}
\end{lemma}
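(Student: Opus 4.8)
The plan is to analyze the single-variable function $x\mapsto Z(x,t)$ directly from its explicit formula for each fixed $t\in[\pi/3,\pi/2)$, treating $\cos(t)\in(0,1/2]$ as a parameter $c$. Writing $Z(x,t)$ out using the expressions for $G,H$ and $L$ from \eqref{eq:def_Z(x,t)}, \eqref{eq:def_L} and the formulas for $G(x,t),H(x,t)$, one gets $Z(x,t)=L(\phi)\,g_c(x)-g_c(\phi)\,L(x)$ where
\[g_c(x):=G(x,t)+H(x,t)=\frac{x((x^2+1)c-2x)}{(x^2+1-2xc)^2}+\frac{x((x^2+1)c+2x)}{(x^2+1+2xc)^2}.\]
For item $(i)$: at $x=\phi$ the two terms $L(\phi)g_c(\phi)-g_c(\phi)L(\phi)$ cancel, giving $Z(\phi,t)=0$; as $x\to\infty$ one has $g_c(x)=O(1/x)$ and $L(x)=O(1/x)$, so $Z(x,t)\to 0$; and $Z(1,t)<0$ reduces to checking $L(\phi)g_c(1)<g_c(\phi)L(1)$, which after clearing the (positive) denominators becomes a polynomial inequality in $c$ on $(0,1/2]$ that I would verify by elementary estimation. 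Items $(iii)$ and $(iv)$ are finitely many inequalities that, after clearing denominators, reduce to verifying that explicit polynomials in $c$ are nonnegative on $(0,1/2]$; these I would dispatch by factoring or by crude bounding of coefficients, possibly splitting the interval $(0,1/2]$ into a few subintervals if a single estimate is too lossy.

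The core of the lemma is item $(ii)$, the unimodality of $x\mapsto Z(x,t)$ with a unique maximum $x_t\in(3,4)$. My approach would be to study $\partial_x Z(x,t)$. After clearing denominators, $\partial_x Z(x,t)$ has the same sign as a polynomial $P_c(x)$ (of controlled degree) whose coefficients are polynomials in $c$; the claim is that $P_c$ has exactly one sign change from $+$ to $-$ on $(1,\infty)$, located in $(3,4)$. To pin down the location I would check $P_c(3)>0$ and $P_c(4)<0$ for all $c\in(0,1/2]$ — again polynomial inequalities in $c$ — which gives at least one root in $(3,4)$. For uniqueness I would argue that $\partial_x Z$ has no other sign change: either by a descartes-type count on $P_c$ after a substitution $x=1+u$ (so the relevant range is $u>0$), or, more robustly, by using the structure $Z=L(\phi)g_c-g_c(\phi)L$ together with monotonicity properties of the ratio $g_c(x)/L(x)$: if $x\mapsto g_c(x)/L(x)$ is itself unimodal on $(1,\infty)$ then so is $Z$ (since $Z(x,t)=L(x)\big(L(\phi)\tfrac{g_c(x)}{L(x)}-g_c(\phi)\big)$ and $L(x)>0$, the sign of $Z$ and the location of its critical points are governed by $g_c/L$ once one accounts for the zero of $L$-weighting — this needs care near $x\to\infty$ where $L\to 0$).

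I would carry the steps out in the order: (1) establish the algebraic normal form of $Z$ and record that $L(x)>0$ for $x>1$ and $g_c(\phi)>0$, $L(\phi)>0$; (2) prove $(i)$ from the normal form; (3) prove $(ii)$ via the sign analysis of $\partial_x Z$ as above, with the boundary checks $P_c(3)>0>P_c(4)$; (4) deduce $Z(x,t)>0$ precisely on an interval $(\phi,\beta_t)$ with $\beta_t$ large (using $(i)$ and $(ii)$), which is what makes $(iii)$ and $(iv)$ plausible, and then prove $(iii)$ and $(iv)$ by direct polynomial-in-$c$ estimates.

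The main obstacle I expect is the uniqueness half of $(ii)$: ruling out extra critical points of $Z$ on $(1,\infty)$ uniformly in the parameter $c\in(0,1/2]$. The polynomial $P_c$ governing $\partial_x Z$ will have moderately high degree with messy $c$-dependent coefficients, so a naive Descartes/Sturm argument may not close cleanly; the cleanest route is likely to prove that $x\mapsto g_c(x)/L(x)$ is strictly unimodal, which isolates the difficulty into a single auxiliary monotonicity statement that can be attacked by logarithmic differentiation and sign analysis of a lower-degree expression. The remaining items are routine once the right polynomial inequalities in $c$ are set up, and I would relegate all of these verifications to the appendix as the paper indicates.
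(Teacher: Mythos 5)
Your proposal identifies the right general approach — analyze $Z$ through the normal form $L(\phi)g_c - g_c(\phi)L$, study $\partial_x Z$, and reduce to explicit inequalities — and this is indeed what the paper does. But there are two genuine gaps.

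First, for item $(ii)$ you correctly flag the uniqueness of $x_t$ as ``the main obstacle'' but do not close it: you offer a Descartes/Sturm count (which you concede ``may not close cleanly'') or unimodality of $g_c/L$ (which you concede ``needs care near $x\to\infty$''). Neither is carried out, and the latter would in any case require a separate argument, since a product of a unimodal function with the decreasing weight $L(x)$ is not automatically unimodal. The paper's route is different and worth noting: it rewrites $\partial_x Z(x,t)=0$ as $L(\phi)R(x,t)=P(\phi,t)/(x^4+x^2+1)$, observes that this equation is invariant under $x\mapsto 1/x$, and therefore works on the compact domain $x\in(0,1)$, where it can show the left side is decreasing on $[0,1/2]$ (via a sign analysis of an auxiliary polynomial $N$) and the right side is increasing, and that no solution exists on $[1/2,1]$ by comparing a uniform max of the left with a uniform min of the right. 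This symmetry reduction is the key idea that makes uniqueness tractable; without something like it, the Descartes/Sturm route on the original domain $(1,\infty)$ is likely to stall exactly where you predict.

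Second, you lump items $(iii)$ and $(iv)$ together as ``finitely many inequalities \ldots{} explicit polynomials in $c$.'' That is accurate for $(iii)$, which is four pointwise evaluations $\tilde Z(k,t)$ for $k=1,2,3,4$ bounded below uniformly in $t$. But $(iv)$ is an inequality over a continuum $x\in[4/3,\phi]$, not a finite set, and in the paper its proof is substantially more work: it splits the claim into two pieces, establishes monotonicity of each piece in $t$ by controlling the mixed partial $\partial_x\partial_t P$, reduces to the boundary values $t=\pi/3$ and $t=\pi/2$, and then shows the resulting single-variable functions of $x$ divided by $(\phi-x)$ are monotone so that the extremum is at $x=4/3$. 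Treating $(iv)$ as ``routine'' once $(iii)$ is done is an underestimate; you would need a two-variable monotonicity/extremal argument of the same flavor as the paper's, or some other uniform-in-$x$ mechanism, which your sketch does not supply.
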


The following corollary characterizes the good and bad terms associated to $w$.

\begin{coro}\label{cor:good_and_bad}
    Let $w=[\overline{a_1;a_2,\ldots,a_\ell}]$ be a real quadratic irrationality with purely periodic continued fraction expansion and $w_{i,m}$ as in \eqref{eq:def_S_F}. Then $w_{i,m}$ is a good term if and only if $w_{i,m}\geq \phi$. 
\end{coro}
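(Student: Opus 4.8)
The plan is to prove Corollary~\ref{cor:good_and_bad} as an almost immediate consequence of Lemma~\ref{lem:first_prop_Z(x,t)}, together with a small amount of extra bookkeeping on the possible values of $w_{i,m}$. Recall from the definition in \eqref{eq:def_S_F} that $w_{i,m}=[m;\overline{a_{i+1},\ldots,a_\ell,a_1,\ldots,a_i}]$ with $m$ a positive integer and the tail a number in $(0,1)$; hence $w_{i,m}\in(m,m+1)$ and in particular $w_{i,m}>1$, so all terms lie in the range where Lemma~\ref{lem:first_prop_Z(x,t)} applies. By definition a term is good exactly when $Z(w_{i,m},t)\ge 0$ for all $t\in[\pi/3,\pi/2)$.

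The core of the argument is the shape of $x\mapsto Z(x,t)$ described in Lemma~\ref{lem:first_prop_Z(x,t)}: for each fixed $t\in[\pi/3,\pi/2)$ it is strictly increasing on $[1,x_t]$ with $x_t\in(3,4)$ and strictly decreasing on $[x_t,\infty)$, with $Z(1,t)<0$, $Z(\phi,t)=0$ and $Z(x,t)\to 0^+$ as $x\to\infty$ (the limit being from above since $Z$ decreases to $0$ after the maximum). I would first treat the case $w_{i,m}\ge\phi$. If $w_{i,m}\in[\phi,x_t]$ then monotonicity on $[1,x_t]$ together with $Z(\phi,t)=0$ gives $Z(w_{i,m},t)\ge Z(\phi,t)=0$. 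If $w_{i,m}\ge x_t$ then $Z$ is decreasing on $[x_t,\infty)$ and tends to $0$ at infinity, so $Z(w_{i,m},t)\ge\lim_{x\to\infty}Z(x,t)=0$. Since $x_t$ depends on $t$ but in both sub-cases we land on $Z(w_{i,m},t)\ge 0$, and the two sub-cases cover all $w_{i,m}\ge\phi$ regardless of where $x_t$ falls, we conclude that $w_{i,m}\ge\phi$ implies the term is good.

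For the converse I would show that $w_{i,m}<\phi$ forces badness, i.e.\ the existence of some $t\in[\pi/3,\pi/2)$ with $Z(w_{i,m},t)<0$. Here $w_{i,m}\in(1,\phi)$. On $[1,\phi]$, monotonicity on $[1,x_t]$ (note $\phi<2<x_t$, so $[1,\phi]\subseteq[1,x_t]$) and $Z(\phi,t)=0$ give $Z(w_{i,m},t)<Z(\phi,t)=0$ for every $t\in[\pi/3,\pi/2)$, using that the increase is \emph{strict}. So any such term is in fact bad for \emph{all} $t$ in the range, which is even stronger than needed. Combining the two directions proves the corollary. The one point requiring a little care is justifying that $\lim_{x\to\infty}Z(x,t)=0$ is approached from above (so that the decreasing-tail estimate indeed yields a nonnegative value); this follows because after the unique maximum at $x_t$ the function is strictly decreasing and has limit $0$, hence stays positive on $[x_t,\infty)$. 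I do not expect any serious obstacle here: all the analytic content is packaged in Lemma~\ref{lem:first_prop_Z(x,t)}, and what remains is the elementary observation that every $w_{i,m}$ exceeds $1$ together with the monotonicity case analysis around the threshold $\phi$.
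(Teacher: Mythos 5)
Your argument is correct and is essentially the paper's proof, just written out with the case split around $x_t$ made explicit: the paper likewise invokes Lemma~\ref{lem:first_prop_Z(x,t)}$(i)$--$(ii)$ together with $\phi<3<x_t$ to conclude $Z(x,t)<0$ on $[1,\phi)$ and $Z(x,t)\ge 0$ on $[\phi,\infty)$, which is exactly the content of your two cases.
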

\begin{proof}
Let $t\in [\pi/3,\pi/2)$. Since $1<\phi= 1.61\ldots <3 <x_t$, by items $(i)$ and $(ii)$ in Lemma \ref{lem:first_prop_Z(x,t)} we have $Z(x,t)<0$ for $x\in [1,\phi)$ and $Z(x,t)\geq 0$ for $x\in [\phi,\infty)$. This implies the result. 
\end{proof}

\subsection{Fourth step: completing the proof}\label{sec:fourth_step_first_main_thm}

We need to prove \eqref{eq:goal2_first_thm} for all $t\in [\pi/3,\pi/2)$. If $w_{i,m}=[m;\overline{a_{i+1},\ldots, a_\ell,a_1,\ldots,a_{i}}]$ is a bad term then $w_{i,m}< \phi$ by Corollary \ref{cor:good_and_bad} and in particular $m=1$. If we further assume that $a_{i+1}\leq 2$ then
$$w_{i,1}=[1;\overline{a_{i+1},\ldots, a_\ell,a_1,\ldots,a_{i}}]\geq 1+\frac{1}{2+\frac{1}{1}}=\frac{4}{3}.$$
Thus $w_{i,1}\in [4/3,\phi]$ and we have $Z(w_{i,1},t)+Z\left(\frac{1}{w_{i,1}-1},t\right)\geq 0$ by Lemma \ref{lem:first_prop_Z(x,t)}$(iv)$. But
$$\frac{1}{w_{i,1}-1}=[\overline{a_{i+1};a_{i+2},\ldots, a_\ell,a_1,\ldots,a_{i}}]=[a_{i+1};\overline{a_{i+2},\ldots, a_\ell,a_1,\ldots,a_{i},a_{i+1}}]> \phi.$$
Hence, we can match the bad term $w_{i,1}$ with  the good term $\frac{1}{w_{i,1}-1}=w_{i+1,a_{i+1}}$ to get a non-negative contribution in \eqref{eq:goal2_first_thm}. 

Now, assume $a_{i+1}\geq 3$. Then
$$w_{i+1,2}=[2;\overline{a_{i+2},\ldots, a_\ell,a_1,\ldots,a_{i},a_{i+1}}]\in (2,3)$$
and
$$w_{i+1,3}=[3;\overline{a_{i+2},\ldots, a_\ell,a_1,\ldots,a_{i},a_{i+1}}]\in (3,4)$$
are good terms. Moreover, by items $(ii)$ and $(iii)$ in Lemma \ref{lem:first_prop_Z(x,t)} we have
$$Z(w_{i,1},t)+Z(w_{i+1,2},t)+Z(w_{i+1,3},t)\geq Z(1,t)+Z(2,t)+\min \{Z(3,t),Z(4,t)\}\geq 0.$$
This shows that we can match the bad term $w_{i,1}$ with $w_{i+1,2}$ together with $w_{i+1,3}$ and get a non-negative contribution in \eqref{eq:goal2_first_thm}. Since all the remaining terms are good we conclude that \eqref{eq:goal2_first_thm} holds. This proves the result.

\section{Strategy of proof for Theorem \ref{thm:second_main}}\label{sec:strategy_second_main}

Let $w$ be a Markov irrationality. Clearly, we can assume $w\neq \psi$. Moreover, by Theorem \ref{thm:first_main} we can assume $w\neq \phi$. We present our proof of Theorem \ref{thm:second_main} in four steps.

\subsection{First step: a reduction}
In order to prove Theorem \ref{thm:second_main} we need to show that
$$\int_{\pi/3}^{2\pi/3}f(e^{it})\Big(\hat{S}(w,t)+\hat{S}(w,\pi-t)\Big)dt\leq \int_{\pi/3}^{2\pi/3}f(e^{it})\Big(\hat{S}(\psi,t)+\hat{S}(\psi,\pi-t)\Big)dt.$$
By Lemma $\ref{lem: D integral}$ it is enough to prove that
$$D_{w,\psi}(t)= \Big(\hat{S}(w,t)+\hat{S}(w,\pi-t)\Big)-\Big(\hat{S}(\psi,t)+\hat{S}(\psi,\pi-t)\Big)$$
is decreasing for $t\in[\pi/3,\pi/2]$. 

Using Lemma \ref{lem:D_w1,w2_decreasing_criterion}, in order to prove that $D_{w,\psi}(t)$ is decreasing for $t\in[\pi/3,\pi/2]$ it is enough to prove the inequalities
\begin{eqnarray}
S_L(\psi)[S_G(w,t)+S_H(w,t)] &\leq& S_L(w)[S_G(\psi,t)+S_H(\psi,t)], \label{eq:first_silver} \\
S_L(\psi)[S_G(w^\op,t)+S_H(w^\op,t)] &\leq& S_L(w)[S_G(\psi,t)+S_H(\psi,t)], \label{eq:second_silver}
\end{eqnarray}
for all Markov irrationalities $w$ and all $t\in [\pi/3,\pi/2]$. Clearly, it is enough to prove \eqref{eq:first_silver} for all quadratic irrationalities of the form $w=[\overline{a_1;a_2,\ldots,a_\ell}]$ with $\ell$ even and $a_i\in \{1,2\}$ for all $i$. 

\subsection{Second step: the function $U(x,t)$}

Let
\begin{equation}\label{eq:def_U}
    U(x,t):= L(x)\Big(S_G(\psi,t)+S_H(\psi,t)\Big)-S_L(\psi)\Big(G(x,t)+H(x,t)\Big). 
\end{equation}
Inequality \eqref{eq:first_silver} is equivalent to
\begin{equation*}
    S_U(w,t):=\sum_{i=1}^{\ell}\sum_{m=1}^{a_i}U(w_{i,m},t)\geq 0.
\end{equation*}
The main properties of the function $U(x,t)$ defined in \eqref{eq:def_U} are the content of the following lemma whose proof is given in the appendix.

\begin{lemma}\label{lem:key_prop_U}
    For every $t\in [\pi/3,\pi/2)$ and every $x\in [\phi,\psi]$ we have
    \begin{enumerate}
        \item[$(i)$] $U(x,t)+U(\Phi(x),t)\geq 0$ and
        \item[$(ii)$] $U(x,t)+U(\Phi(x),t)+U(\Psi(x),t)+U(\Phi\circ\Psi(x),t)\geq 0$.
    \end{enumerate}
\end{lemma}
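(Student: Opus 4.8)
\textbf{Proof strategy for Lemma \ref{lem:key_prop_U}.}

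The plan is to reduce both inequalities to properties of the single-variable function $x\mapsto U(x,t)$ on a small interval and then verify those properties by the same kind of analysis used for $Z(x,t)$ in Lemma \ref{lem:first_prop_Z(x,t)}. First I would record the explicit form of $U(x,t)$: the quantity $S_G(\psi,t)+S_H(\psi,t)$ is a fixed (in $x$) function of $t$, built from $G,H$ evaluated at the two terms $\psi_{i,m}$ associated to $\psi=[\overline{2;2}]$, namely $\psi_{1,1}=\psi_{1,2}=$ the relevant continued fractions $[1;\overline{2}]$ and $[2;\overline{2}]=\psi$; since $G,H,L$ are invariant under $x\mapsto 1/x$, $S_G(\psi,t)+S_H(\psi,t)$ collapses to a sum of a couple of explicit rational-trigonometric terms, and likewise $S_L(\psi)=2L(\psi)$ (using $v_1\cdots v_\ell=\varepsilon$ as in Proposition \ref{prop:formula_re_valf}). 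Thus $U(x,t)=c_1(t)L(x)-c_2(t)(G(x,t)+H(x,t))$ for explicit positive functions $c_1,c_2$, exactly parallel to $Z$, and one checks $U(\psi,t)=0$ and (by the $1/x$-invariance) $U(1/\psi,t)=0$ as well. As with $Z$, $U(x,\pi/2)\equiv 0$, so we may assume $t\in[\pi/3,\pi/2)$ throughout.

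Next I would establish the shape of $x\mapsto U(x,t)$ on $[\phi,\psi]$: analogous to Lemma \ref{lem:first_prop_Z(x,t)}$(ii)$, I expect $U(x,t)$ to be concave-like with a single sign change pattern on the relevant range, with $U(\phi,t)$ and $U(\psi,t)$ controlling the endpoints — in fact $U(\psi,t)=0$ and one computes the sign of $U(\phi,t)$ (I expect $U(\phi,t)\geq 0$, consistent with Theorem \ref{thm:first_main} giving $\val_f(\phi)$ as the global minimum, hence $\phi$ being "favorable"). The key point is that the maps $\Phi(x)=1+1/x$ and $\Psi(x)=2+1/x$ send the interval $[\phi,\psi]$ into ranges where one can pair a possibly-negative value $U(x,t)$ against a positive one: note $\Phi([\phi,\psi])=[1+1/\psi,1+1/\phi]=[\,[1;\overline{2}],\phi\,]\subseteq[4/3,\phi]$ and $\Psi([\phi,\psi])\subseteq(2,3)$, $\Phi\circ\Psi([\phi,\psi])\subseteq(4/3,3/2)$. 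For part $(i)$, I would show $U(x,t)+U(1+1/x,t)\geq 0$ for $x\in[\phi,\psi]$ by the same telescoping/convexity trick used for Lemma \ref{lem:first_prop_Z(x,t)}$(iv)$: rewrite $U$ using the identity $\tfrac{1+x+x^2}{1-(1+1/x)+(1+1/x)^2}=x^2$ so that the $L$-contributions of $x$ and $1+1/x$ combine cleanly, and bound the remaining $G+H$ terms; the symmetry $H(x,t)=-G(x,\pi-t)$ keeps the bookkeeping manageable. For part $(ii)$ I would similarly combine the four terms, using that the four arguments $x,\Phi(x),\Psi(x),\Phi\Psi(x)$ are exactly the terms $w_{i,m}$ produced when one replaces a single "$2$" in the continued fraction by a longer Markov block, so the $L$-sum again telescopes; then $(ii)$ follows from $(i)$ plus a lower bound on $U(\Psi(x),t)+U(\Phi\Psi(x),t)$ valid because both arguments lie in $(4/3,3)$ where $U$ is controlled by the monotonicity established above (mirroring the three-term bound in Lemma \ref{lem:first_prop_Z(x,t)}$(iii)$).

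The main obstacle I anticipate is the monotonicity/concavity statement for $x\mapsto U(x,t)$ uniformly in $t\in[\pi/3,\pi/2)$ — i.e.\ the analogue of Lemma \ref{lem:first_prop_Z(x,t)}$(ii)$ — since $U$ is built from $\psi$ rather than $\phi$ and so the "base point" and the location of the critical point $x_t$ shift; pinning down that $x_t$ stays to the right of $\psi$ (so that $U$ is increasing on all of $[\phi,\psi]$, which would make $(i)$ and $(ii)$ nearly immediate from the endpoint values) requires a careful sign analysis of $\partial_x U(x,t)$, reducing to showing a polynomial in $x$ with $t$-dependent coefficients has no root in the relevant window. This is a finite but delicate computation of exactly the type deferred to the appendix for $Z$, and I would carry it out by factoring out the manifestly positive denominators and checking the sign of the resulting numerator at the endpoints together with a bound on its derivative, or by a direct interval estimate; everything else is routine manipulation using the $1/x$-invariance of $G,H,L$ and the continued-fraction identities already in the paper.
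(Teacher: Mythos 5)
Your structural analysis of $U$ is mostly right — the collapse of $S_G(\psi,t)+S_H(\psi,t)$ to a short explicit sum, the identity $S_L(\psi)=2\log(\psi)$, the telescoping of the $L$-contributions via $L(x)+L(1+1/x)=\tfrac12\log\bigl(\tfrac{3x^2+3x+1}{x^2-x+1}\bigr)$, and the image intervals $\Phi([\phi,\psi])\subseteq[4/3,\phi]$, $\Psi([\phi,\psi])\subseteq(2,3)$, $\Phi\Psi([\phi,\psi])\subseteq(4/3,3/2)$ all match the paper. But there is a concrete error in your anchoring: $U(\psi,t)=0$ is \emph{false}. Unlike $Z$, whose reference point is a single number $\phi$, the reference for $U$ is the pair of partial quotients of $\psi=[\overline{2;2}]$, which produces the two distinct values $\psi$ and $\psi-1=\sqrt{2}$ in $S_G,S_H,S_L$. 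A short computation gives
$$U(\psi,t)=2L(\psi)\bigl(G(\sqrt{2},t)+H(\sqrt{2},t)\bigr)-2L(\sqrt{2})\bigl(G(\psi,t)+H(\psi,t)\bigr),$$
which does not vanish identically. What is true, and what the paper actually uses, is that the \emph{sum} vanishes at the endpoint: since $\Phi(\psi)=\psi-1=\sqrt 2$, one has $U(\psi,t)+U(\Phi(\psi),t)=0$ directly from the definition of $S_G,S_H,S_L$ for $\psi$.

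This matters because your fallback plan — show $U$ itself is monotone on $[\phi,\psi]$ and read off the sign from endpoint values — cannot close the argument: the inequality in $(i)$ couples $U$ at a point of $[\phi,\psi]$ with $U$ at a point of $[\sqrt2,\phi]$, so monotonicity of $U$ on $[\phi,\psi]$ alone says nothing about $U(\Phi(x),t)$, and the false equality $U(\psi,t)=0$ deprives you of a usable base value. The paper's route is instead: anchor on $\tilde U(\psi,t)+\tilde U(\Phi(\psi),t)=0$, differentiate the combined function $x\mapsto \tilde U(x,t)+\tilde U(\Phi(x),t)=\tfrac12\log\bigl(\tfrac{3x^2+3x+1}{x^2-x+1}\bigr)K(t)-\log(\psi)\bigl(P(x,t)+P(1+1/x,t)\bigr)$ (with $K(t)=P(\psi,t)+P(\psi-1,t)$), and show $\partial_x\leq 0$ on $[\phi,\psi]$ by splitting the derivative into three pieces and verifying uniform numerical bounds; part $(ii)$ is analogous with four $L$-terms telescoping to $\tfrac12\log\bigl(\tfrac{19x^2+15x+3}{x^2-x+1}\bigr)$, vanishing at $x=\psi$, and five uniform bounds (two of which are reused from $(i)$). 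Your proposal has the right ingredients (the telescoping identities and the reduction to numerical estimates), but you need to replace the incorrect claim $U(\psi,t)=0$ by the vanishing of the sum at $x=\psi$ and then bound $\partial_x$ of that sum, rather than trying to control $U$ alone.
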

    
\subsection{Third step: a convenient rearrangement}

Recall that $\Phi=\left(\begin{smallmatrix}
    1 & 1 \\ 1 & 0
\end{smallmatrix}\right)$ and $\Psi=\left(\begin{smallmatrix}
    2 & 1 \\ 1 & 0
\end{smallmatrix}\right)$, which are the generators of the stabilizers in $\GLN$ of $\phi$ and $\psi$, respectively. A simple rearrangement of terms as in \cite[Proof of Proposition 5.1]{BHI25} shows that $S_U(w,t)=S^{(1)}_U(w,t)+S^{(2)}_U(w,t)$ where $S^{(1)}_U(w,t)$ is
\begin{eqnarray*}
&& \sum_{\substack{1\leq i\leq \ell\\ a_{i-1}=2,i \text{ odd}}}\bigg(U([\overline{a_i;a_{i+1},\ldots, a_\ell,a_1,\ldots,a_{i-1}}],t)+U(\Psi([\overline{a_i;a_{i+1},\ldots, a_\ell,a_1,\ldots,a_{i-1}}]),t)\\
& & \quad +U(\Phi([\overline{a_i;a_{i+1},\ldots, a_\ell,a_1,\ldots,a_{i-1}}]),t)+U(\Phi\circ \Psi([\overline{a_i;a_{i+1},\ldots, a_\ell,a_1,\ldots,a_{i-1}}]),t) \bigg),
\end{eqnarray*}
and $S^{(2)}_U(w,t)$ is
\begin{eqnarray*}
    \sum_{\substack{1\leq i\leq \ell\\ a_{i-1}=1,i \text{ odd}}}\bigg( U([\overline{a_i;a_{i+1},\ldots, a_\ell,a_1,\ldots,a_{i-1}}],t)+U(\Phi([\overline{a_i;a_{i+1},\ldots, a_\ell,a_1,\ldots,a_{i-1}}]),t)\bigg).
\end{eqnarray*}

\subsection{Fourth step: completing the proof}

To complete the proof of Theorem \ref{thm:second_main} we also need the following well-known result which can be found, e.g., in \cite[Lemma 1.24]{Aig2013}.

\begin{lemma}\label{lem:comparison_continued_fractions}
    Let $u=[a_1;a_2,\ldots]$, $v=[b_1;b_2,\ldots]$ be two different real numbers. Then $u<v$ if and only if $(-1)^{i+1}a_i<(-1)^{i+1}b_i$ where $i\geq 1$ is the first index for which $a_i\neq b_i$.
\end{lemma}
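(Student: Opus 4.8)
## Proof strategy for Lemma \ref{lem:comparison_continued_fractions}

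The plan is to prove the lemma by induction on the index $i\geq 1$ at which the two expansions first disagree, using the single structural fact that stripping the leading partial quotient of a continued fraction both \emph{decrements} this index by one and \emph{reverses} the order (since $x\mapsto 1/x$ is decreasing on the positive reals), and that these two effects exactly cancel in the sign $(-1)^{i+1}$. First I would record the elementary normalization facts I will use: for an infinite continued fraction $x=[a_1;a_2,a_3,\ldots]$ with $a_j\geq 1$ for all $j\geq 2$ one has that $x$ is irrational, that $x'=[a_2;a_3,\ldots]$ satisfies $x'>1$, and consequently that $a_1=\lfloor x\rfloor$ and $x-a_1=1/x'$; thus the data $(a_1,x')$ is recovered from $x$ by $a_1=\lfloor x\rfloor$, $x'=1/(x-\lfloor x\rfloor)$. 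Since all continued fractions occurring in the paper are purely periodic, hence infinite and in this normalized form, there are no finite-expansion edge cases to handle.

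For the base case $i=1$ I would argue directly: if $a_1\neq b_1$ then $\lfloor u\rfloor=a_1\neq b_1=\lfloor v\rfloor$, and two reals with distinct integer parts satisfy $u<v$ if and only if $\lfloor u\rfloor<\lfloor v\rfloor$, i.e.\ if and only if $a_1<b_1$; since $(-1)^{1+1}=1$, this is precisely the assertion of the lemma for $i=1$.

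For the inductive step, assume the statement holds whenever the first index of disagreement is $i-1$, and let $u,v$ first disagree at index $i\geq 2$. Then $a_1=b_1$, so writing $u=a_1+1/u'$ and $v=a_1+1/v'$ with $u'=[a_2;a_3,\ldots]$ and $v'=[b_2;b_3,\ldots]$, the tails $u',v'$ are again in normalized form, are unequal, and first disagree at index $i-1\geq 1$, their $(i-1)$-st partial quotients being $a_i$ and $b_i$. Since $u',v'>0$, we have $u<v\iff 1/u'<1/v'\iff u'>v'$. Applying the inductive hypothesis to $u',v'$ gives $u'<v'\iff(-1)^{(i-1)+1}a_i<(-1)^{(i-1)+1}b_i$, that is, $u'<v'\iff(-1)^{i}a_i<(-1)^{i}b_i$; negating this equivalence, which is legitimate because $a_i\neq b_i$ forces $u'\neq v'$ and $(-1)^i a_i\neq(-1)^i b_i$, yields $u'>v'\iff(-1)^{i}a_i>(-1)^{i}b_i\iff(-1)^{i+1}a_i<(-1)^{i+1}b_i$. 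Combining with $u<v\iff u'>v'$ completes the induction.

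I do not expect a genuine obstacle: the whole argument is elementary, and the only point that demands care is the parity bookkeeping in the inductive step — one must notice that passing to the tails simultaneously shifts the index of first disagreement by one and inverts the inequality, so that the comparison of $(-1)^{i+1}a_i$ with $(-1)^{i+1}b_i$ is preserved. The remaining care is simply to state the normalization facts (that $a_1=\lfloor u\rfloor$ and that the tail of a normalized continued fraction is again normalized) precisely enough that the base case and the reduction are rigorous. If one wanted the lemma for possibly finite continued fractions as well, one would additionally have to treat the case in which one expansion is an initial segment of the other, using that a genuine tail $[a_k;\ldots]\geq 1$ contributes a positive amount in a controlled direction; but this situation never arises in the present application.
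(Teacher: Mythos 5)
Your argument is correct. Note, however, that the paper does not prove this lemma at all: it is quoted as a well-known fact with a citation to Aigner's book (Lemma 1.24 there), so there is no ``paper proof'' to match against. Your induction on the index of first disagreement --- stripping the leading partial quotient, observing that this shifts the disagreement index by one while $x\mapsto 1/x$ reverses the order, and noting that the two effects cancel in the sign $(-1)^{i+1}$ --- is essentially the standard textbook argument (an alternative classical route goes through the convergents $p_n/q_n$ and the alternating approximation property, which yields the same parity pattern). The details check out: the base case via $a_1=\lfloor u\rfloor$ is justified because the tail of a normalized infinite expansion is strictly greater than $1$, the negation step in the inductive argument is legitimate since $a_i\neq b_i$ together with uniqueness of normalized infinite expansions forces $u'\neq v'$, and your restriction to infinite expansions is harmless here because the lemma is only applied to purely periodic (hence infinite) continued fractions of quadratic irrationalities. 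In short, you supply a complete self-contained proof where the paper simply defers to the literature; the only thing your write-up adds beyond what the application needs is the closing remark about finite expansions, which, as you say, never arises in this paper.
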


Recall that $w=[\overline{a_1;a_2,\ldots,a_\ell}]$ with $w\neq \phi$ and $w\neq \psi$. Let $x=[\overline{a_i;a_{i+1},\ldots, a_\ell,a_1,\ldots,a_{i-1}}]$ with $i$ odd. Then, the first appearance of a 1, from left to right, among the partial quotients of $x$ occurs in an odd position, and the same holds for the first appearance of a 2. Hence, by Lemma \ref{lem:comparison_continued_fractions}, we have 
\begin{equation*}
\phi=[\overline{1;1}]< x< [\overline{2;2}]=\psi.
\end{equation*}
In particular, $x\in [1,\psi]$ hence by Lemma \ref{lem:key_prop_U}$(ii)$ we get
\begin{eqnarray*}
 & &  \bigg(U([\overline{a_i;a_{i+1},\ldots, a_\ell,a_1,\ldots,a_{i-1}}],t)+U(\Psi([\overline{a_i;a_{i+1},\ldots, a_\ell,a_1,\ldots,a_{i-1}}]),t)\\
& & \quad +U(\Phi([\overline{a_i;a_{i+1},\ldots, a_\ell,a_1,\ldots,a_{i-1}}]),t)+U(\Phi\circ \Psi([\overline{a_i;a_{i+1},\ldots, a_\ell,a_1,\ldots,a_{i-1}}]),t) \bigg) \geq 0. 
\end{eqnarray*}
This holds, in particular, whenever $i$ is odd and $a_{i-1}=2$, thus $S^{(1)}_U(w,t)$ is a sum of non-negative terms and we conclude that $S^{(1)}_U(w,t)\geq 0$. Similarly, by Lemma \ref{lem:key_prop_U}$(i)$  we have
$$U([\overline{a_i;a_{i+1},\ldots, a_\ell,a_1,\ldots,a_{i-1}}],t)+U(\Phi([\overline{a_i;a_{i+1},\ldots, a_\ell,a_1,\ldots,a_{i-1}}]),t)\geq 0$$
whenever $i$ is odd and $a_{i-1}=1$, thus $S^{(2)}_U(w,t)$ is a sum of non-negative terms and we conclude that $S^{(2)}_U(w,t)\geq 0$. We then obtain $S(w,t)=S^{(1)}_U(w,t)+S^{(2)}_U(w,t)\geq 0$, as desired. This completes the proof of Theorem \ref{thm:second_main}.

\section*{Appendix: Proofs of technical lemmas}

In this appendix, we prove Lemmas \ref{lem: D integral}, \ref{lem:first_prop_Z(x,t)} and \ref{lem:key_prop_U}. The proofs of Lemmas \ref{lem:first_prop_Z(x,t)} and \ref{lem:key_prop_U} are reduced to the verification of numerical bounds for several auxiliary functions, which in turn follow from standard calculations. In most cases we present only a sketch of the calculations for the sake of brevity. 

\subsection*{A.1.~Proof of Lemma \ref{lem: D integral}}

Since $g(t)$ and $D(t)$ are invariant under $t\mapsto \pi-t$, it is enough to prove that
\[\int_{\pi/3}^{\pi/2}g(t)D(t)dt\leq 0.\]
    Let $t_0$ be a zero of $D(t)$ in $[\pi/3,\pi/2]$. It exists since $D(t)$ is continuous and $\int_{\pi/3}^{\pi/2}D(u)du=0$. Put \(I^+=[\pi/3,t_0]\) and \(I^-=[t_0,\pi/2]\). Note that $D(t)\geq 0$  for $t\in I^+$ and $D(t)\leq 0$  for $t\in I^-$ since $D(t)$ is decreasing for $t\in [\pi/3,\pi/2]$. Since $g$ is increasing in $I^+$ and $D(t)\geq 0$ for $t\in I^+$ we have 
    \(\int_{I^+}g(t)D(t)\leq g(t_0)\int_{I^+}D(t)dt\). 
    Now, since $\int_{\pi/3}^{\pi/2}D(t)dt=0$ and $D(t)\leq 0$ for $t\in I^-$ we also have 
    \(\int_{I^+}D(t)= \int_{I^-}|D(t)|dt.\)
    Using that $g$ is also increasing in $I^-$ we conclude that 
    \[\int_{I^+}g(t)D(t)\leq g(t_0)\int_{I^-}|D(t)|dt\leq \int_{I^-}g(t)|D(t)|dt=-\int_{I^-}g(t)D(t)dt, \]
    hence
    \[\int_{\pi/3}^{\pi/2}g(t)D(t)dt=\int_{I^+}g(t)D(t)dt+\int_{I^-}g(t)D(t)dt\leq 0.\]
    This proves the result.

\subsection*{A.2.~Proof of Lemma \ref{lem:first_prop_Z(x,t)}$(i)$}

We have
\[Z(1,t)= L(\phi)\left(\frac{1}{2(\cos(t)-1)}+\frac{1}{2(\cos(t)+1)}\right)-\frac{1}{2}\log(3)\big(G(\phi,t)+H(\phi,t)\big).\]
Letting
\begin{equation}\label{eq:def_P}
P(x,t):=\frac{G(x,t)+H(x,t)}{\cos(t)}=\frac{2 x \left(x^2+1\right) \left(2 x^2 \cos (2 t)+x^4-4 x^2+1\right)}{\left(-2
   x^2 \cos (2 t)+x^4+1\right)^2}    
\end{equation}
we see that $Z(1,t)<0$ is equivalent to 
\begin{equation}\label{eq:Z(1)<0,step2}
    \frac{2L(\phi)}{\cos(t)^2-1}<\log(3)P(\phi,t).
\end{equation}
The left-hand side of \eqref{eq:Z(1)<0,step2} is maximized at $t=\pi/2$ with maximal value $-0.962\ldots$, while the right-hand side of \eqref{eq:Z(1)<0,step2} 
is minimized at $t=\pi/3$ with minimal value $-0.614\ldots$; see Figure \ref{fig:plot_P} for an illustration. This shows that \eqref{eq:Z(1)<0,step2} holds for all $t\in [\pi/3,\pi/2)$, hence $Z(1,t)<0$ as desired.

\begin{figure}[h!]
    \centering
    \includegraphics[width=0.6\linewidth]{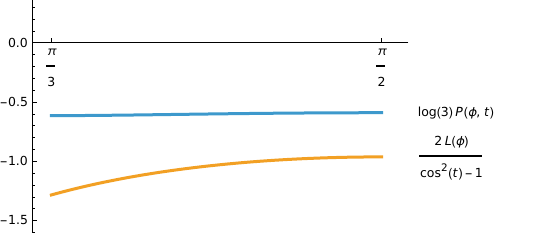}
    \caption{Plots of $\frac{2L(\phi)}{\cos(t)^2-1}$ and $\log(3)P(\phi,t)$ for $t\in [\pi/3,\pi/2]$.}
    \label{fig:plot_P}
\end{figure}
   
The fact that $Z(\phi,t)=0$ follows from the definition of $Z(x,t)$ in \eqref{eq:def_Z(x,t)}. Finally, the fact that  $Z(x,t)\to 0$ as $x\to \infty$ follows from the limits
\[\lim_{x\to \infty}G(x,t)= 0, \quad \lim_{x\to \infty}H(x,t)= 0, \quad \lim_{x\to \infty}L(x)= 0,\]
which in turn follow directly from the definition of each of these functions.

\subsection*{A.3.~Proof of Lemma \ref{lem:first_prop_Z(x,t)}$(ii)$}

By Lemma \ref{lem:first_prop_Z(x,t)}$(i)$ it is enough to show that for fixed $t$ the partial derivative $\partial_x Z(x,t)$ vanishes at exactly one point $x_t\in (3,4)$ for $x\in (1,\infty)$. We start by computing
\begin{eqnarray*}
    \partial_x G(x,t)
    &=& -\frac{(x^2 - 1) ((x^2 + 1) \cos(t) + 2 x \cos^2(t) - 4 x)}{(-2 x \cos(t) + x^2 + 1)^3},\\
    \partial_x H(x,t) 
    &=&  -\frac{(x^2 - 1) ((x^2 + 1) \cos(t) - 2 x \cos^2(t) + 4 x)}{(2 x \cos(t) + x^2 + 1)^3}, \\
    L'(x)&=&\frac{1 - x^2}{x^4 + x^2 + 1}.
\end{eqnarray*}
For $x>1$ we get that $\partial_x Z(x,t)=0$ if and only if
    \begin{eqnarray*}
 & & L(\phi)\left( \frac{ ((x^2 + 1) \cos(t) + 2 x \cos^2(t) - 4 x)}{(-2 x \cos(t) + x^2 + 1)^3} + \frac{ ((x^2 + 1) \cos(t) - 2 x \cos^2(t) + 4 x)}{(2 x \cos(t) + x^2 + 1)^3}\right)\\
 &= & \left( G(\phi,t)+H(\phi,t)\right)\frac{1}{x^4 + x^2 + 1}.
    \end{eqnarray*}
Let~$R(x,t)$ be the function
\begin{eqnarray*}
& &\frac{1}{\cos(t)}\left(\frac{ ((x^2 + 1) \cos(t) + 2 x \cos^2(t) - 4 x)}{(-2 x \cos(t) + x^2 + 1)^3} + \frac{ ((x^2 + 1) \cos(t) - 2 x \cos^2(t) + 4 x)}{(2 x \cos(t) + x^2 + 1)^3}\right)\\
&=&\frac{2 \left(12 x^6 \cos (2 t)+16 x^4 \cos (2 t)+2 x^4 \cos (4 t)+12
   x^2 \cos (2 t)+x^8-8 x^6-28 x^4-8 x^2+1\right)}{\left(-2 x^2 \cos
   (2 t)+x^4+1\right)^3}.
\end{eqnarray*}
Then, for $x>1$ we have~$\partial_x Z(x,t)=0$ if and only if
    \begin{eqnarray}\label{eq:LRP}
 L(\phi)R(x,t)= \frac{P(\phi,t)}{x^4 + x^2 + 1},
    \end{eqnarray}
    where $P(x,t)$ is defined in \eqref{eq:def_P}. 
    This equality is invariant under $x\mapsto 1/x$, hence we can instead check that it has only one solution $\tilde{x}_t:=x_t^{-1}\in (0,1)$ with $\tilde{x}_t\in(1/4,1/3)$. We first check that \eqref{eq:LRP} has a solution $\tilde{x}_t\in (1/4,1/3)$. For $x=1/4$ the function $t\mapsto L(\phi)R(1/4,t)-\frac{P(\phi,t)}{(1/4)^4 + (1/4)^2 + 1}$ for $t\in [\pi/3,\pi/2]$ is minimized at $t=\pi/2$ with minimal value $0.2229\ldots$, hence
\[L(\phi)R(1/4,t)- \frac{P(\phi,t)}{(1/4)^4 + (1/4)^2 + 1}\geq 0.222\ldots.\]
For $x=1/3$ the function $t\mapsto L(\phi)R(1/3,t)-\frac{P(\phi,t)}{(1/3)^4 + (1/3)^2 + 1}$ for $t\in [\pi/3,\pi/2]$ is maximized at $t=\pi/3$ with maximal value $-0.202\ldots$, hence
\[L(\phi)R(1/3,t)-\frac{P(\phi,t)}{(1/3)^4 + (1/3)^2 + 1}\leq -0.202\ldots.\]
See Figure \ref{fig:Plot_exists_xt} for a comparison of the plots of these functions.

\begin{figure}[h!]
    \centering 
    \includegraphics[width=0.6\linewidth]{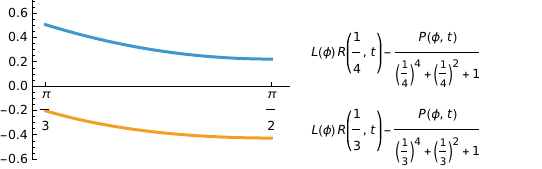}
    \caption{Plots of $L(\phi)R(x,t)- \frac{P(\phi,t)}{x^4 + x^2 + 1}$ for $x=1/3$ and $x=1/4$ as functions of $t\in [\pi/3,\pi/2]$.}
    \label{fig:Plot_exists_xt}
\end{figure}

We conclude that equation \eqref{eq:LRP} has at least one solution $\tilde{x}_t\in (1/4,1/3)$. We will now show that this is the only solution in the domain $x\in (0,1)$. Since $P(\phi,t)<0$ (see Figure \ref{fig:plot_P}) and $x\mapsto \frac{1}{x^4 + x^2 + 1}$ 
is decreasing for $x\in [0,1]$, we have that $\frac{P(\phi,t)}{x^4 + x^2 + 1}$ is increasing for $x\in [0,1]$. Then, the uniqueness of the solution $\tilde{x}_t$ of \eqref{eq:LRP} in the domain $x\in [0,1]$ follows from the following properties of the function $L(\phi)R(x,t)$ valid for every fixed $t\in [\pi/3,\pi/2)$:
\begin{enumerate}
    \item[$(a)$] $L(\phi)R(x,t)$ is decreasing for $x\in [0,1/2]$.
    \item[$(b)$] $L(\phi)R(x,t)< \frac{P(\phi,t)}{x^4+x^2+1}$ for all $x\in [1/2,1]$.
\end{enumerate}
Indeed, property $(a)$ together with the fact that $\frac{P(\phi,t)}{x^4 + x^2 + 1}$ is increasing for $x\in [0,1]$ ensures that the solution $\tilde{x}_t$ of \eqref{eq:LRP} is unique in the domain $x\in [0,1/2]$, while property $(b)$ ensures that \eqref{eq:LRP} has no solution for $x\in [1/2,1]$.

In order to prove $(a)$ we start by noting that $L(\phi)=\log(\phi) >0$ and writing $R(x,t)=2M(x^2,2t)$ where
\[M(x,t):=\frac{ \left(12 x^3 \cos (t)+16 x^2 \cos (t)+2 x^2 \cos (2 t)+12
   x \cos ( t)+x^4-8 x^3-28 x^2-8 x+1\right)}{\left(-2 x \cos
   ( t)+x^2+1\right)^3}\]
so that $\partial_xR(x,t)=4\,\partial_xM(x^2,2t)x$. A computation shows that 
\(\partial_x M(x,t)=-\frac{2N(x,t)}{\left(-2 x \cos (t)+x^2+1\right)^4}\)
where
\begin{eqnarray*}
  N(x,t) &:=& 19 x^4 \cos ( t)+32 x^3 \cos ( t)+4 x^3 \cos (2 t)+39
   x^2 \cos ( t)-8 x^2 \cos (2 t)-x^2 \cos (3 t)\\ & & -14 x \cos (2 t)-9
   \cos ( t)+x^5-12 x^4-58 x^3-16 x^2+19 x+4.
\end{eqnarray*}
We have to check that $N(x,t)\geq 0$ for all $x\in [0,1/4]$ and all $t\in [2\pi/3,\pi]$. Standard calculations show that $N(x,t)$ is minimized at $(x,t)=(0,2\pi/3)$ with minimal value $8.5$; see Figure \ref{fig:Plot_N} for a plot. This shows that $N(x,t)\geq 0$ holds for all $(x,t)\in [0,1/4]\times [2\pi/3,\pi]$ and $(a)$ is proven.

\begin{figure}[h!]
    \centering 
    \includegraphics[width=0.4\linewidth]{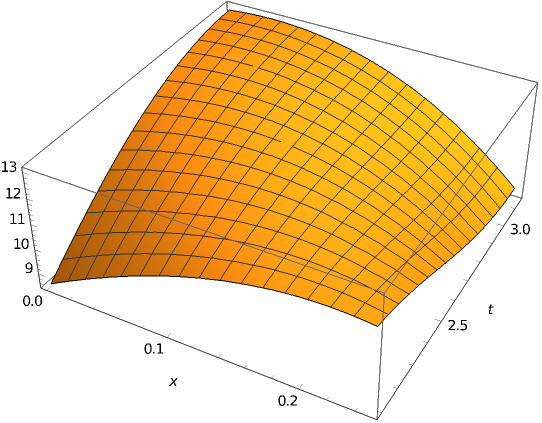}
    \caption{Plot of $N(x,t)$ for $(x,t)\in [0,1/4]\times [2\pi/3,\pi]$. The point $(0,2\pi/3)$ is a global minimum with value $8.5$.}
    \label{fig:Plot_N}
\end{figure}

Finally, item $(b)$ follows from the facts that for $(x,t)\in[1/2,1]\times [\pi/3,\pi/2]$ the function $L(\phi)R(x,t)$ is maximized at $(x,t)=(1,\pi/2)$ with maximal value $-1.203\ldots$ while  $\frac{P(\phi,t)}{x^4+x^2+1}$ is minimized at $(x,t)=(1/2,\pi/3)$ with minimal value $-0.425\ldots$; see Figure \ref{fig:plots_LR_P} for the plots of these two functions. This proves $(b)$ and completes the proof of Lemma \ref{lem:first_prop_Z(x,t)}$(ii)$. 

\begin{figure}[h!]
    \centering
    \subfloat{\includegraphics[width=0.4\linewidth]{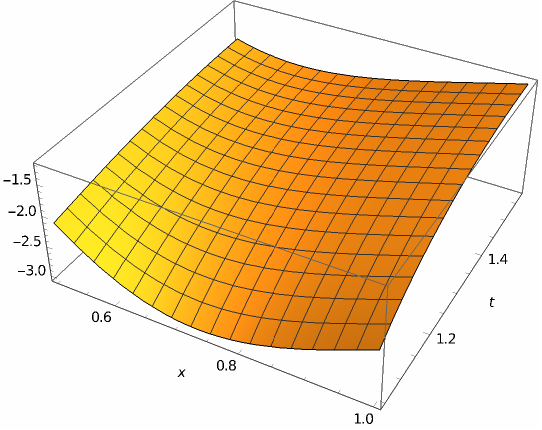}} \quad \quad 
    \subfloat{\includegraphics[width=0.4\textwidth]{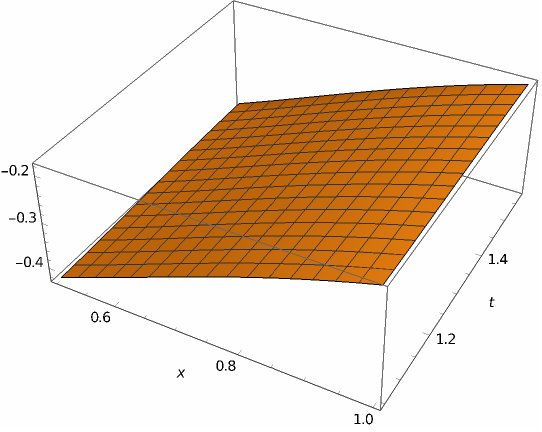}} 
\caption{Plots of $L(\phi)R(x,t)$ on the left and of $\frac{P(\phi,t)}{x^4+x^2+1}$ on the right for $(x,t)\in [1/2,1]\times [\pi/3,\pi/2]$.} \label{fig:plots_LR_P}
\end{figure}

\subsection*{A.4.~Proof of Lemma \ref{lem:first_prop_Z(x,t)}$(iii)$}

Let $\tilde{Z}(x,t):=\frac{Z(x,t)}{\cos(t)}$. We have to prove that \(\tilde{Z}(1,t)+\tilde{Z}(2,t)+\min\{\tilde{Z}(3,t),\tilde{Z}(4,t)\}> 0.\) 
We have
\begin{eqnarray*}
\tilde{Z}(1,t) &=& \frac{\log(\phi)}{\cos(t)^2-1}-\log(3)\frac{\sqrt{5}(4\cos(t)^2-3)}{(4\cos(t)^2-5)^2}, \\ 
\tilde{Z}(2,t) &=& \log(\phi) \frac{(320\cos(t)^2 - 140)}{(16\cos(t)^2 - 25)^2}-\log\left(\frac{7}{3}\right)\frac{\sqrt{5}(4\cos(t)^2-3)}{(4\cos(t)^2-5)^2}, \\
\tilde{Z}(3,t) &=&  \log(\phi) \frac{(135\cos(t)^2 + 105)}{(9\cos(t)^2 - 25)^2}- \log\left(\frac{13}{7}\right)\frac{\sqrt{5}(4\cos(t)^2-3)}{(4\cos(t)^2-5)^2}, \\\
\tilde{Z}(4,t) &=&  \log(\phi) \frac{(8704\cos(t)^2 + 21896)}{(64\cos(t)^2 - 289)^2} - \log\left(\frac{21}{13}\right)\frac{\sqrt{5}(4\cos(t)^2-3)}{(4\cos(t)^2-5)^2}.
\end{eqnarray*}
A calculation shows that $\tilde{Z}(1,t)\geq \tilde{Z}(1,\pi/3)=-0.334\ldots$, $\tilde{Z}(2,t)\geq \tilde{Z}(2,\pi/2)=0.119\ldots$,  $\tilde{Z}(3,t)\geq \tilde{Z}(3,\pi/2)=0.246\ldots$ and $\tilde{Z}(4,t)\geq \tilde{Z}(4,\pi/2)=0.254\ldots$ for all $t\in[\pi/3,\pi/2]$. 
In particular, $\tilde{Z}(1,t)+\tilde{Z}(2,t)+\min\{\tilde{Z}(3,t),\tilde{Z}(4,t)\}\geq 0.031\ldots >0$. This proves the result.


\subsection*{A.5.~Proof of Lemma \ref{lem:first_prop_Z(x,t)}$(iv)$}

We have to prove that
\begin{eqnarray*}
    L(\phi)(G(x,t)+H(x,t))-(G(\phi,t)+H(\phi,t))L(x)&\\
    +L(\phi)\left(G\left(\frac{1}{x-1},t\right)+H\left(\frac{1}{x-1},t\right)\right)-(G(\phi,t)+H(\phi,t))L\left(\frac{1}{x-1}\right)&\geq 0
\end{eqnarray*}
for all $(x,t)\in [4/3,\phi]\times [\pi/3,\pi/2)$. 
Using the function $P(x,t)$ defined in \eqref{eq:def_P}, this is equivalent to
\begin{eqnarray*}
    L(\phi)\left(P(x,t)+P\left(\frac{1}{x-1},t\right)\right) \geq  P(\phi,t)\left(L(x)+L\left(\frac{1}{x-1}\right)\right), 
\end{eqnarray*}
hence also equivalent to
\begin{eqnarray*}
    L(\phi)P(x,t)-P(\phi,t)L(x) \geq  P(\phi,t)L\left(\frac{1}{x-1}\right)-L(\phi)P\left(\frac{1}{x-1},t\right).
\end{eqnarray*}
This inequality follows directly from the following two facts:
\begin{enumerate}
    \item[$(a)$] The function $\frac{L(\phi)P(x,t)-P(\phi,t)L(x)}{\phi-x}$ for $(x,t)\in [4/3,\phi)\times [\pi/3,\pi/2]$ is minimized at $(x,t)=(4/3,\pi/3)$ with minimal value \(-0.651\ldots\).
    \item[$(b)$] The function $\frac{P(\phi,t)L\left(\frac{1}{x-1}\right)-L(\phi)P\left(\frac{1}{x-1},t\right)}{\phi-x}$ for $(x,t)\in [4/3,\phi)\times [\pi/3,\pi/2]$ is maximized at $(x,t)=(4/3,\pi/2)$ with maximal value \(-0.867\ldots\). 
\end{enumerate}
In order to prove $(a)$ we first compute 
\[\partial_t P(x,t)=-\frac{8x^3 \sin(2t) (x^2 + 1) (2x^2 \cos(2t) - 8x^2 + 3x^4 + 3)}{(x^4 - 2 \cos(2t) x^2 + 1)^3}\]
and
\[\partial_x\partial_tP(x,t)=\frac{8 x^2 (x^2-1) \sin(2t)p(x,t)}{(1 + x^4 - 2 x^2 \cos(2 t))^4}\]
where
\(p(x,t):= 9 + 9 x^8 + (4 x^2 + 4 x^6) (-4 + 7 \cos(2 t)) + 2 x^4 (-38 + 16 \cos(2 t) + \cos(4 t)).\)
Then
\begin{equation}\label{(iv)_a1}
L(\phi)\partial_x\partial_t P(x,t)\leq \partial_tP(\phi,t)L'(x)
\end{equation}
is equivalent to
\begin{equation}\label{(iv)_a2}
\frac{\log(\phi)x^2 p(x,t)}{(1 + x^4 - 2 x^2 \cos(2 t))^4}\leq -\frac{\sqrt{5}  (2 \cos(2t) + 1)}{(2 \cos(2t) - 3)^3  (1+x^2 + x^4)}.
\end{equation}
For $(x,t)\in [4/3,\phi]\times [\pi/3,\pi/2]$ the left-hand side of \eqref{(iv)_a2} is maximized at $(x,t)=(\phi,\pi/2)$ with maximal value $-0.019\ldots$, while the right-hand side of \eqref{(iv)_a2} is minimized at $(x,t)=(4/3,\pi/2)$ with minimal value \(-0.003\ldots\); see Figure \ref{fig:Plot3d_two} for the plots of these two functions. 
This implies that \eqref{(iv)_a2} and \eqref{(iv)_a1} hold. 

\begin{figure}[h!]
    \centering 
    \includegraphics[width=0.4\linewidth]{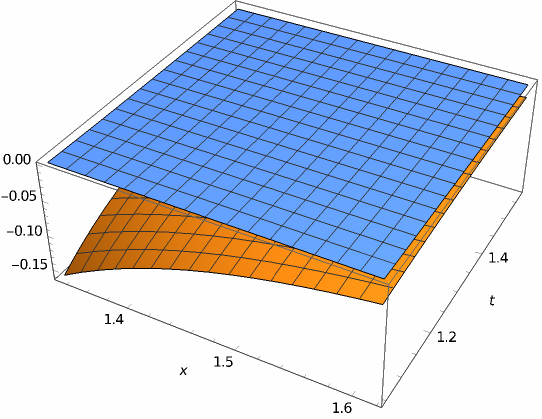}
    \caption{Plots of $\frac{\log(\phi)x^2 p(x,t)}{(1 + x^4 - 2 x^2 \cos(2 t))^4}$ (below) and $ -\frac{\sqrt{5}  (2 \cos(2t) + 1)}{(2 \cos(2t) - 3)^3  (1+x^2 + x^4)}$ (above) for $(x,t)\in [4/3,\phi]\times [\pi/3,\pi/2]$.}
    \label{fig:Plot3d_two}
\end{figure}

Now, \eqref{(iv)_a1} shows that for fixed $t$ the difference $L(\phi)\partial_t P(x,t)- \partial_tP(\phi,t)L(x)$ is minimized when $x=\phi$, so $L(\phi)\partial_t P(x,t)- \partial_tP(\phi,t)L(x)\geq 0$ for all $(x,t)\in [4/3,\phi]\times [\pi/3,\pi/2)$. This, in turn, shows that for fixed $x$ the difference $L(\phi)P(x,t)- P(\phi,t)L(x)$ is minimized when $t=\pi/3$ hence
\begin{align*}
  L(\phi)P(x,t)- P(\phi,t)L(x)&\geq L(\phi)P(x,\pi/3)- P(\phi,\pi/3)L(x) \\
 &  = \frac{\sqrt{5}}{8} \log\left(\frac{x^2 + x + 1}{x^2 - x + 1}\right) + \frac{2x \log\left(\phi\right) (x^2 + 1) (x^4 - 5x^2 + 1)}{(x^4 + x^2 + 1)^2}.  
\end{align*}
Another simple calculation shows that
\begin{equation}\label{eq:function_for_plot}
    \frac{1}{\phi-x}\left(\frac{\sqrt{5}}{8} \log\left(\frac{x^2 + x + 1}{x^2 - x + 1}\right) + \frac{2x \log\left(\phi\right) (x^2 + 1) (x^4 - 5x^2 + 1)}{(x^4 + x^2 + 1)^2}\right)
\end{equation}
for $x\in [4/3,\phi)$ is minimized at $x=4/3$; see Figure \ref{fig:Mat_Plot4} for a plot. Then, evaluating this expression at $x=4/3$ gives $(a)$.

\begin{figure}[h!]
    \centering 
    \includegraphics[width=0.4\linewidth]{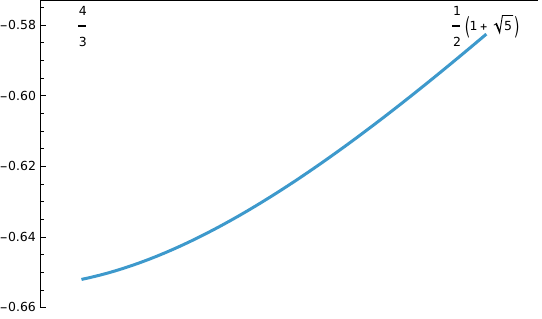}
    \caption{Plot of function \eqref{eq:function_for_plot} for $x\in [4/3,\phi)$.}
    \label{fig:Mat_Plot4}
\end{figure}

In order to prove $(b)$ we start by computing
\begin{align*}
\partial_tP(\phi,t)L\left(\frac{1}{x-1}\right)&= \frac{4 \sqrt{5} \sin (2 t) (2 \cos (2 t)+1) \log \left(\frac{x^2-x+1}{x^2-3   x+3}\right)}{(2 \cos (2 t)-3)^3}, \\
L(\phi)\partial_tP\left(\frac{1}{x-1},t\right)&= -\frac{8 (x-1)^3 \left(x^2-2 x+2\right) \log \left(\phi\right) \sin (2 t) q(x,t)}{\left(\left(x^2-2 x+2\right)^2-4 (x-1)^2 \cos   ^2(t)\right)^3},
\end{align*}
where
\(q(x,t):=2 (x-1)^2 \cos (2 t)+3 x^4-12   x^3+10 x^2+4 x-2.\)
Then 
\begin{equation}\label{eq:A5b}
\partial_tP(\phi,t)L\left(\frac{1}{x-1}\right)\geq L(\phi)\partial_tP\left(\frac{1}{x-1},t\right)
\end{equation}
for $(x,t)\in [4/3,\phi]\times [\pi/3,\pi/2)$ is equivalent to
\begin{equation}\label{eq:A5b2}
 1 \leq \frac{2(3-2 \cos (2 t))^3 \log \left(\phi\right) (x-1)^3 \left(x^2-2 x+2\right) q(x,t)}{ \sqrt{5} (2 \cos (2 t)+1)\left(\left(x^2-2 x+2\right)^2-4 (x-1)^2 \cos   ^2(t)\right)^3 \log \left(\frac{x^2-x+1}{x^2-3   x+3}\right)}.
\end{equation}
A simple computation shows that
\begin{equation}\label{eq:A5b3}
0<\frac{2\log(\phi) (x-1)^3(x^2-2x+2)}{\sqrt{5}\log \left(\frac{x^2-x+1}{x^2-3   x+3}\right)}\leq \frac{1}{\phi^{4}}.
\end{equation}
On the other hand, for fixed $t$ the function $x\mapsto \frac{ q(x,t)}{\left(\left(x^2-2 x+2\right)^2-4 (x-1)^2 \cos   ^2(t)\right)^3}$ is decreasing for $x\in [4/3,\phi]$, which implies that
\[\frac{ q(x,t)}{\left(\left(x^2-2 x+2\right)^2-4 (x-1)^2 \cos   ^2(t)\right)^3}\geq  \phi^4\frac{(2\cos(2t)+1)}{(3-2\cos(2t))^3}.\]
Noting that the righ-hand side is negative, we obtain
\begin{equation}\label{eq:A5b4}
\frac{ (3-2\cos(2t))^3  q(x,t)}{(2\cos(2t)+1)\left(\left(x^2-2 x+2\right)^2-4 (x-1)^2 \cos   ^2(t)\right)^3}\leq  \phi^4.
\end{equation}
Combining \eqref{eq:A5b4} with \eqref{eq:A5b3} we obtain \eqref{eq:A5b2} which in turn shows \eqref{eq:A5b}. Now, \eqref{eq:A5b} implies that for fixed $x$ the function $P(\phi,t)L\left(\frac{1}{x-1}\right)-L(x)P\left(\frac{1}{x-1},t\right)$ is maximized when $t=\pi/2$, hence
\begin{align*}
   & P(\phi,t)L\left(\frac{1}{x-1}\right)-L(x)P\left(\frac{1}{x-1},t\right)\leq P\left(\phi,\frac{\pi}{2}\right)L\left(\frac{1}{x-1}\right)-L(x)P\left(\frac{1}{x-1},\frac{\pi}{2}\right)\\
   & \quad \quad \quad = -\frac{3 \left(47+21 \sqrt{5}\right) \log \left(\frac{x^2-x+1}{x^2-3
   x+3}\right)}{525+235 \sqrt{5}}-\frac{2\left(x^5-5 x^4+4 x^3+8 x^2-12 x+4\right)
   \log \left(\phi\right)}{\left(x^2-2 x+2\right)^3}. 
\end{align*}
A simple calculation shows that
\begin{equation}\label{eq:function_for_plot_2}
    \frac{1}{\phi-x}\left(-\frac{3 \left(47+21 \sqrt{5}\right) \log \left(\frac{x^2-x+1}{x^2-3
   x+3}\right)}{525+235 \sqrt{5}}-\frac{2\left(x^5-5 x^4+4 x^3+8 x^2-12 x+4\right)
   \log \left(\phi\right)}{\left(x^2-2 x+2\right)^3}\right)
\end{equation}
for $x\in [4/3,\phi)$ is maximized at $x=4/3$; see Figure \ref{fig:Mat_Plot5} for a plot. Finally, evaluating this expression at $x=4/3$ gives $(b)$.

\begin{figure}[h!]
    \centering 
    \includegraphics[width=0.4\linewidth]{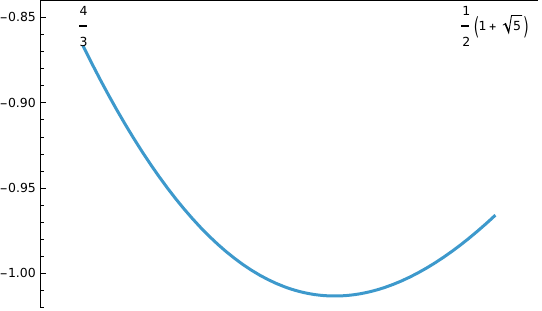}
    \caption{Plot of function \eqref{eq:function_for_plot_2} for $x\in [4/3,\phi)$.}
    \label{fig:Mat_Plot5}
\end{figure}

\subsection*{A.6.~Proof of Lemma \ref{lem:key_prop_U}$(i)$} Let
\(\tilde{U}(x,t):=\frac{U(x,t)}{2\cos(t)}.\) Then
\[\tilde{U}(x,t)=L(x)\Big(P(\psi,t)+P(\psi-1,t)\Big)-\Big(L(\psi)+L(\psi-1)\Big)P(x,t).\]
We have to prove that $\tilde{U}(x,t)+\tilde{U}\left(\Phi(x),t\right)\geq 0$ for all $(x,t)\in [\phi,\psi]\in [\pi/3,\pi/2)$. For $x=\psi$ we have  $\tilde{U}(x,t)+\tilde{U}\left(\Phi(x),t\right)=0$, hence it is enough to prove that for fixed $t$ the function $x\mapsto \tilde{U}(x,t)+\tilde{U}\left(\Phi(x),t\right)$ is decreasing for $x\in [\phi,\psi]$. 
We have
\begin{equation*}
L(x)+L\left(\Phi(x)\right) = \frac{1}{2}\log\left(\frac{3x^2+3x+1}{x^2-x+1}\right), \qquad L(\psi)+L(\psi-1)=\log(\psi)
\end{equation*}
and
\begin{equation*}
    K(t):=P(\psi,t)+P(\psi-1,t)=\frac{7  \sqrt{2}  \left(63\cos(t)^2 - 54\cos(t)^4 + 16\cos(t)^6 - 24\right)}{\left(8\cos(t)^4 - 25\cos(t)^2 + 18\right)^2}.
\end{equation*}
This shows that $\tilde{U}(x,t)+\tilde{U}\left(\Phi(x),t\right)$ equals
\[ \frac{1}{2}\log\left(\frac{3x^2+3x+1}{x^2-x+1}\right)K(t) -\log(\psi)\left(P(x,t)+P\left(1+\frac{1}{x},t\right)\right),\]
hence $\partial_x\left(\tilde{U}(x,t)+\tilde{U}\left(\Phi(x),t\right)\right)\leq 0$ if and only if 
\[\frac{x^2(2 + 2 x - 3 x^2)}{(1 + 2 x + x^2 + 3 x^4)}K(t)\leq \log(\psi)\left(x^2\partial_xP(x,t)-\partial_xP\left(1+\frac{1}{x},t\right)\right).\]
Then the desired result follows from the following uniform bounds valid for all $(x,t)\in [\phi,\psi]\times [\pi/3,\pi/2]$:
\begin{enumerate}
    \item[$(a)$] $\frac{x^2(2 + 2 x - 3 x^2)}{(1 + 2 x + x^2 + 3 x^4)}K(t)\leq 0.5$,
    \item[$(b)$] $\log(\psi)x^2\partial_xP(x,t)\geq 2.1$,
    \item[$(c)$] $\log(\psi)\partial_xP\left(1+\frac{1}{x},t\right)\leq 1.4$.
\end{enumerate}
Item $(a)$ follows from the bounds 
\[-0.546\ldots \leq \frac{x^2(2 + 2 x - 3 x^2)}{(1 + 2 x + x^2 + 3 x^4)}\leq -0.25, \qquad -0.750\ldots \leq K(t)\leq -0.733\ldots.\]
For item $(b)$ we compute
\[\partial_xP(x,t)=\frac{2 \left(1-x^2\right) \left(1 + x^8 + \left(4 x^2 + 4 x^6\right) \left(-2 + 3 \cos(2 t)\right) + 2 x^4 \left(-14 + 8 \cos(2 t) + \cos(4 t)\right)\right)}{\left(1 + x^4 - 2 x^2 \cos(2 t)\right)^3}.\]
Then $(b)$ is equivalent to
\begin{align*}
& \log(\psi) x^2\left(1-x^2\right)\left(1 + x^8 + \left(4 x^2 + 4 x^6\right) \left(-2 + 3 \cos(2 t)\right) + 2 x^4 \left(-14 + 8 \cos(2 t) + \cos(4 t)\right)\right)\\
& \geq  1.05\left(1 + x^4 - 2 x^2 \cos(2 t)\right)^3.
\end{align*}
After expanding, collecting terms and putting $y:=x^2$ the above inequality turns out to be equivalent to
\begin{align*}
-\frac{1.05}{y}  \geq  & \, \, -\log   (\psi )-6.3 \cos (2 t) +y \left(-12 \log (\psi ) \cos   (2 t)+9 \log (\psi )+12.6 \cos ^2(2 t)+3.15\right)\\
& +y^2 \left(-4 \log (\psi ) \cos (2 t)-2 \log (\psi ) \cos (4 t)+20   \log (\psi )-8.4 \cos ^3(2 t)-12.6 \cos (2 t)\right) \\
& +y^3 \left(4 \log (\psi  ) \cos (2 t)+2 \log (\psi ) \cos (4 t)-20 \log (\psi )+12.6 \cos ^2(2   t)+3.15\right)\\
& + y^4 (12 \log (\psi ) \cos (2 t)-9 \log (\psi )-6.3 \cos (2 t)) +y^5 (\log (\psi )+1.05)
\end{align*}
for $(y,u)\in [\phi^2,\psi^2]\times [\pi/3,\pi/2]$. The left-hand side is minimized at $y=\phi^2$ with minimal value $-0.401\ldots$, while the right-hand side is maximized at $(y,t)=(\phi^2,\pi/2)$ with maximal value $-29.385\ldots$. This proves $(b)$. Finally, item $(c)$ is equivalent to
\begin{align*}
& \log(\psi)\left(1-z^2\right)\left(1 + z^8 + \left(4 z^2 + 4 z^6\right) \left(-2 + 3 \cos(2 t)\right) + 2 z^4 \left(-14 + 8 \cos(2 t) + \cos(4 t)\right)\right)\\
& \leq 0.7\left(1 + z^4 - 2 z^2 \cos(2 t)\right)^3
\end{align*}
for $z:=1+\frac{1}{x}\in [\sqrt{2},\phi]$ and $t\in [\pi/3,\pi/2]$. Letting $w:=z^2$ this turns out to be equivalent to
\begin{align*}
    \frac{\log(\psi)-0.7}{w} \leq & \, \,  -12 \log (\psi ) \cos (2 t)+9 \log (\psi )-4.2 \cos (2 t)
     \\
    & +w \left(-4 \log  (\psi ) \cos (2 t)-2 \log (\psi ) \cos (4 t)+20 \log (\psi )+8.4 \cos ^2(2   t)+2.1\right)
    \\
 &   +w^2 \left(4 \log (\psi ) \cos (2 t)+2 \log (\psi )   \cos (4 t)-20 \log (\psi )-5.6 \cos ^3(2 t)-8.4 \cos (2 t)\right)\\
& +w^3 \left(12 \log (\psi ) \cos (2 t)-9 \log (\psi   )+8.4 \cos ^2(2 t)+2.1\right) \\
& +w^4 (\log (\psi )-4.2 \cos (2 t)) +0.7 w^5
\end{align*}
for $(w,t)\in [2,\phi^2]\times [\pi/3,\pi/2]$. The left-hand side is maximized at $w=2$ with  maximal value $0.090\ldots$, while the right-hand side is minimized at $(w,t)=(2,\pi/3)$ with  minimal value $0.714\ldots$. This proves $(c)$ and completes our proof of Lemma \ref{lem:key_prop_U}$(i)$.

\subsection*{A.7.~Proof of Lemma \ref{lem:key_prop_U}$(ii)$} Using the function \(\tilde{U}(x,t)=\frac{U(x,t)}{2\cos(t)}\) as in the proof of  Lemma \ref{lem:key_prop_U}$(i)$ we have to prove that $V(x,t):=\tilde{U}(x,t)+\tilde{U}\left(\Phi(x),t\right)+\tilde{U}\left(\Psi(x),t\right)+\tilde{U}\left(\Phi\circ \Psi(x),t\right)\geq 0$. Using that
\begin{eqnarray*}
L(x)+L(\Phi(x))+L(\Psi(x))+L(\Phi\circ \Psi(x))=\frac{1}{2}\log\left(\frac{19x^2+15x+3}{x^2-x+1}\right)
\end{eqnarray*}
we get that $V(x,t)$ equals
\[\frac{1}{2}\log\left(\frac{19x^2+15x+3}{x^2-x+1}\right)K(t)-\log(\psi)\left(P(x,t)+P\left(\Phi(x),t\right)+P\left(\Psi(x),t\right)+P\left(\Phi\circ \Psi(x),t\right)\right).\]
Since this function vanishes at $x=\psi$, in order to prove that it is non-negative for $(x,t)\in [\phi,\psi]\times [\pi/3,\pi/2)$ it is enough to prove that $\partial_xV(x,t)\leq 0$. This is equivalent to
\begin{align*}
    & \frac{(9 + 16x - 17 x^2)x^2}{(1 -x+x^2) (3 + 15x + 19 x^2)}K(t) \\
    &\qquad \leq  \log(\psi)\left(x^2\partial_xP(x,t)-\partial_xP(\Phi(x),t)-\partial_xP(\Psi(x),t)+\frac{x^2}{(2x+1)^2}\partial_xP(\Phi\circ \Psi(x),t)\right).
\end{align*}
Then the desired result follows from the following uniform bounds valid for all $(x,t)\in [\phi,\psi]\times [\pi/3,\pi/2]$:
\begin{enumerate}
    \item[$(a)$] $\frac{(9 + 16x - 17 x^2)x^2}{(1 -x+x^2) (3 + 15x + 19 x^2)}K(t)\leq 0.34$,
    \item[$(b)$] $\log(\psi)x^2\partial_xP(x,t)\geq 2.1$,
    \item[$(c)$] $\log(\psi)\partial_xP\left(\Phi(x),t\right)\leq 1.4$.
    \item[$(d)$] $\log(\psi)\partial_xP\left(\Psi(x),t\right)\leq 0.38$.
    \item[$(e)$] $\log(\psi)\frac{x^2}{(2x+1)^2}\partial_xP\left(\Phi\circ \Psi(x),t\right)\geq 0.12$.
\end{enumerate}
Items $(b)$ and $(c)$ were already verified in the proof of Lemma \ref{lem:key_prop_U}$(i)$. Items $(a)$, $(d)$ and $(e)$ can be verified by similar methods,  hence we omit the details. 

\end{document}